\newcommand{\eps}{\varepsilon}
\newcommand{\R}{\mathbb{R}}
\newcommand{\PP}{\mathbb{P}}
\newcommand{\dd}[1][]{\mathrm{d}#1}
\newcommand{\obs}{\mathrm{obs}}
\newcommand{\intv}{\mathrm{int}}
\newcommand{\nobs}{n_{\obs}}
\newcommand{\nint}{n_{\intv}}
\newcommand{\mId}{\mathbb{I}}
\newcommand{\transp}{\mathrm{T}}
\newcommand{\smallbullet}{\tikz[baseline]\draw[fill=black] (0, 0.4ex) circle (0.15ex);}
\DeclareMathOperator{\Cov}{Cov}
\DeclareMathOperator{\pa}{pa}
\DeclareMathOperator{\diag}{diag}
\DeclareMathOperator{\doop}{do}
\DeclareMathOperator{\tr}{tr}
\DeclareMathOperator{\SHD}{SHD}
\DeclareMathOperator*{\argmin}{arg\,min}
\DeclareMathAlphabet{\mathsc}{OT1}{cmr}{m}{sc}
\newtheorem{theorem}{Theorem}
\newtheorem{lemma}[theorem]{Lemma} 
\newtheorem{proposition}[theorem]{Proposition} 
\newtheorem{corollary}[theorem]{Corollary}
\newtheorem{definition}{Definition}
\newenvironment{subprop}{\begin{enumerate}[label=(\roman*), ref=(\roman*), nolistsep]}{\end{enumerate}}
\newcommand{\SiMy}{\ensuremath{\mathsc{SiMy}}}
\newcommand{\GIES}{\ensuremath{\mathsc{GIES}}} 
\newcommand{\interior}[1]{\stackrel{\circ}{#1}}
\newcommand{\spforall}{\; \forall \;}
\newcommand{\spst}{\;|\;}
\newcommand{\mvn}{\mathcal{N}}
\newcommand{\mcI}{\ensuremath{\mathcal{I}}}
\newcommand{\data}{\mathcal{T}, \mathbf{X}}
\newcommand{\sR}{\mathbb{R}}
\newcommand{\incr}[1]{\textrm{d}#1}
\newcommand{\supscr}[1]{\ensuremath{^{\mathrm{#1}}}}
\newcommand{\subscr}[1]{\ensuremath{_{\mathrm{#1}}}}
\newcommand{\partder}[2]{\frac{\partial #1}{\partial #2}}
\DeclareMathAlphabet{\mathsc}{OT1}{cmr}{m}{sc}
\newlength{\edgelength}
\DeclareRobustCommand{\grarright}{\mathbin{\tikz[baseline] \draw[->] (0pt, 0.7ex) -- (\edgelength, 0.7ex);}}
\DeclareRobustCommand{\grarleft}{\mathbin{\tikz[baseline] \draw[<-] (0pt, 0.7ex) -- (\edgelength, 0.7ex);}}
\newlength{\exgredge}
\newenvironment{exgraphpicture}{%
  \begin{tikzpicture}[baseline=(v1.base)]
    \node[anchor=mid] (v1) at (0, 0) {$1$};
    \node[anchor=mid] (v2) at (\exgredge,  0) {$2$};
    \node[anchor=mid] (v3) at (2\exgredge, 0) {$3$};
    \node[anchor=mid] (v4) at (3\exgredge, 0) {$4$};
    \node[anchor=mid] (v5) at (\exgredge,  -\exgredge) {$5$};
    \node[anchor=mid] (v6) at (2\exgredge, -\exgredge) {$6$};
    \node[anchor=mid] (v7) at (3\exgredge, -\exgredge) {$7$};
}{\end{tikzpicture}}
\newenvironment{proof}[1][]{\par\noindent{\bf Proof\ifthenelse{\equal{#1}{}}{}{ of #1}\ }}{\hfill$\square$\\[2mm]}
\title{Jointly interventional and observational data: estimation of interventional Markov equivalence classes of directed acyclic graphs} 
\author{Alain Hauser and Peter Bühlmann\\
{\small\texttt{\{hauser, buhlmann\}@stat.math.ethz.ch}} \\
       Seminar für Statistik\\
       ETH Zürich\\
       8092 Zürich, Switzerland}
\date{}
\begin{document}

\maketitle

\begin{abstract}
  In many applications we have both observational and (randomized) interventional data. We propose a Gaussian likelihood framework for joint modeling of such different data-types, based on global parameters consisting of a directed acyclic graph (DAG) and correponding edge weights and error variances. Thanks to the global nature of the parameters, maximum likelihood estimation is reasonable with only one or few data points per intervention. We prove consistency of the BIC criterion for estimating the interventional Markov equivalence class of DAGs which is smaller than the observational analogue due to increased partial identifiability from interventional data. Such an improvement in identifiability has immediate implications for tighter bounds for inferring causal effects. Besides methodology and theoretical derivations, we present empirical results from real and simulated data. 
\end{abstract}

Keywords: Causal inference; Interventions; BIC; Graphical model; Maximum likelihood estimation; Greedy equivalence search
 
\section{Introduction}
\label{sec:introduction}

Causal inference often relies on an underlying influence diagram in terms of a directed acyclic graph (DAG). In absence of knowledge of the true underlying DAG, there has been a substantial line of research to estimate the Markov equivalence class of DAGs which is identifiable from data. Most often, the target of interest is the observational Markov equivalence class to be inferred from observational data; that is, the data arises from observing a system in ``steady state'' without any interventions, see for example the books by \citet{Spirtes2000Causation} or \citet{Pearl2000Causality}. For the important case of  multivariate Gaussian distributions, the observational Markov equivalence class is rather large and thus, many parts of the true underlying DAG are unidentifiable from observational data, see for example \citet{Verma1990Equivalence} or \citet{Andersson1997Characterization} for a graphical characterization of the Markov equivalence class in the Gaussian or the fully nonparametric case. Under additional assumptions, identifiability of the whole DAG is guaranteed as with linear structural equation models with non-Gaussian errors \citep{Shimizu2006Linear} or additive noise models \citep{Hoyer2009Nonlinear}, see also \citet{Peters2011Identifiability}. 

In many applications, we have both observational and interventional data, where the latter are coming from (randomized) intervention experiments. In biology, for example, we often have observational data from a wildtype individual and interventional data from mutants or individuals with knocked-out genes.  Besides the methodological issue of properly modeling such data, we gain in terms of identifiability: the interventional Markov equivalence class is smaller \citep{Hauser2012Characterization}, thanks to additional interventional experiments, and this is of particular interest for the Gaussian and nonparametric cases which are hardest in terms of identifiability. 

We focus here on the problem of joint modeling of observational and interventional Gaussian data. Thereby, we assume that the observational distribution is Markovian \citep[and typically faithful; cf.][]{Spirtes2000Causation} to a true underlying DAG $D_0$ and that the different interventional distributions are linked to the DAG $D_0$ and the observational distribution via the intervention calculus using the do-operator \citep{Pearl2000Causality}. Linking all interventional distributions to the same DAG $D_0$ and the single observational distribution allows to deal with the situation where we have only one interventional data point for every intervention target (intervention experiment). We propose to use the maximum likelihood estimator which has not been studied or even used for the observational-interventional data setting. We prove that when penalizing with the BIC score, it consistently identifies the true underlying observational-interventional Markov equivalence class. 

\subsubsection{Relation to other work}

Some approaches to incorporate interventional data for learning causal models have been developed in earlier work.  \citet{Cooper1999Causal} and \citet{Eaton2007Exact} address the problem of calculating a posterior (and also a likelihood) of a data set having observational as well as interventional data but do not investigate properties of the Bayesian estimators e.g.\ in the large-sample limit nor address the issue of identifiability or Markov equivalence.  \citet{He2008Active} present a method which first estimates the observational Markov equivalence class and then in a second step, it identifies additional structure using interventional data. This technique is inefficient due to decoupling into two stages, especially if one has many interventional but only a few observational data: in fact, our maximum likelihood estimator in Section \ref{sec:structure-learning} can cope with the situation where we have interventional data only.  To our knowledge, no analysis of the maximum likelihood estimator of an ensemble of observational and interventional data has been pursued so far.  The computation of the maximum likelihood estimator which we will briefly indicate in Section \ref{sec:simulations} has been developed in \citet{Hauser2012Characterization}: due to its non-trivial nature, it is not dealt with in this paper.  When having observational data only, the works by \citet{Chickering2002Learning,Chickering2002Optimal} are dealing with maximum likelihood estimation and consistency of the BIC score for the corresponding observational Markov equivalence class: however, the extension to the mixed interventional-observational case, which occurs in many real problems, is a highly non-trivial step.

\section{Interventional-observational data and maximum likelihood estimation}

We start by presenting the model and the corresponding maximum likelihood estimator.

\subsection{A Gaussian model}

We consider the setting with $\nobs$ observational and $\nint$
interventional $p$-variate data from the following model:
\begin{align}
    & X^{(1)},\ldots ,X^{(\nobs)}\ \mbox{i.i.d.}\ \sim P_{\obs},\nonumber\\
    & X_{\intv}^{(1)},\ldots , X_{\mathrm{int}}^{(n_{\intv})}\ \mbox{independent,
    and independent of $X^{(1)},\ldots ,X^{(\nobs)}$},\ X_{\intv}^{(i)} \sim
    P_{\intv}^{(i)}. \label{eqn:basic-model}
\end{align}
In the following, we specify the observational distribution
$P_{\obs}$ and all the interventional distributions
$P^{(i)}_{\intv}\ (i=1,\ldots ,\nint)$. 

Regarding the observational distribution, we assume that 
\begin{eqnarray}
    \label{eqn:pobs}
    P_{\obs} = {\cal N}_p(0,\Sigma),\ \mbox{where}\ P_{\obs}\ \mbox{is
    Markovian with respect to a DAG}\ D. 
\end{eqnarray}
The assumption with mean zero is not really a restriction: all derivations
can be easily adapted, at the price of writing an intercept in
many formulas. An implementation in the \texttt{R}-package \texttt{pcalg}
\citep{Kalisch2010Causal} offers the option to restrict to mean zero or not. The
Markovian assumption is equivalent to the factorization property in
(\ref{eqn:factorization}) below. 
We sometimes refer to the true observational distribution as $P_{0,\obs}$
with parameter $\Sigma_0$, and the true DAG is $D_0$.  

In the following, the set of nodes in a DAG $D$, associated to the
$p$-dimensional random vector $(X_1, \ldots, X_p)$, is denoted 
by $\{1,\ldots ,p\}$ and the parental set by
$$
  \pa(j) = \pa_D(j) = \{k;\ k\ \text{a parent of node}\ j\}\ (j=1,\ldots ,p) \ .
$$
The Markov condition of
$P_{\obs}$ with respect to the DAG $D$, with parental sets $\pa(\cdot) =
\pa_D(\cdot)$, allows the following (minimal) factorization of the joint
distribution \citep{Lauritzen1996Graphical}: 
\begin{eqnarray}
    \label{eqn:factorization}
    f_{\obs}(x) = \prod_{j=1}^p f_{\obs}(x_j|x_{\pa(j)}),
\end{eqnarray}
where $f_\obs(\cdot)$ denotes the Gaussian density of $P_\obs$
and $f_\obs(x_j|x_{\pa(j)})$ are univariate Gaussian conditional
densities.  

The interventional distributions $P^{(i)}_\intv\ (i=1,\ldots ,\nint)$ may all
be different but linked to the same observational distribution $P_{\obs}$ and
the same DAG $D$ via the intervention calculus in Section
\ref{sec:docalc}. Due to the common underlying model given by $P_{\obs}$
and the DAG $D$, this 
allows to handle cases where we have only one interventional data point for
every interventional distribution. 

\subsubsection{Intervention calculus}
\label{sec:docalc}

The intervention calculus, or do-calculus \citep{Pearl2000Causality}, is a key concept for describing the model of the intervention distributions. We consider the DAG $D$ appearing in the observational model (\ref{eqn:pobs}), and we assign it a \emph{causal} interpretation as follows.  Assume $X_\intv$ is realized under a (single- or multi-variable) intervention at the intervention target $I \subseteq \{1, \ldots, p\}$ denoting the set of intervened vertices.  The distribution of $X_\intv$ is then given by the so-called truncated factorization, a version of the factorization in (\ref{eqn:factorization}). The truncated factorization for the interventional distribution for $X_\intv$ with deterministic intervention $\doop(X_I = u_I)$ is defined
as \citep{Pearl2000Causality}:
$$
    f_{\intv}(x_{I^c}|\doop(X_I = u_I)) = 
    \prod_{j \notin I} f_{\obs}(x_j|x_{\pa(j) \cap I^c}, u_{\pa(j) \cap I}),
$$
where $f_{\intv}(\cdot|\doop(X_I = u_I)$ is the intervention Gaussian
density when doing an intervention at $X_I$ by setting it to the value
$u_I$, and $f_{\obs}(\cdot|\cdot)$ is as in (\ref{eqn:factorization}). 
Here, the conditioning argument $x_{\pa(j) \cap I^c}, u_{\pa(j) \cap I}$
distinguishes the value of the unintervened variables $x_{\pa(j) \cap I^c}$
and the values of the intervened variables $u_{\pa(j) \cap I}$.

Deterministic interventions as described above make the intervened variables $X_I$ deterministic, having the value of the intervention levels $u_I$.  In this paper, we consider stochastic interventions where the intervened variables $X_I$ are set to the value of a random vector $U_I \sim \prod_{j \in I} f_{U_j}(u_j) \dd{u_j}$ with independent (but not necessarily identically distributed) components having densities $f_{U_j}(\cdot)\ (j \in I)$. The truncated 
factorization for stochastic interventions (where the intervention values
are independent of the observational variables) then reads as follows:
\begin{equation}
    \label{eqn:trunc-factorization}
    f_\intv(x|\doop(X_I = U_I)) = \prod_{j \notin I} f_{\obs}(x_j|x_{\pa(j) \cap
    I^c}, U_{\pa(j) \cap I}) \prod_{j \in I} f_{U_j}(x_j).
\end{equation}
In contrast to the case of deterministic interventions above, the intervention density (\ref{eqn:trunc-factorization}) is $p$-variate: $x \in \R^p$ and for $j \in I$, $x_j$ is then an argument in the density from the random intervention variable $U_j$. In the following, we assume that the densities for the intervention values are Gaussian as well: 
\begin{equation}
    \label{eqn:Udist}
U_1,\ldots,U_p\ \mbox{independent with}\ U_j \sim {\cal
  N}(\mu_{U_j},\tau_j^2)\ (j=1,\ldots ,p).
\end{equation} 

The truncated factorization in (\ref{eqn:trunc-factorization}) or its
deterministic version above can be obtained by applying the Markov property
to the interventional DAG $D_I$: given a DAG $D$, the intervention DAG
$D_I$ is defined as $D$ but deleting all directed edges which point into
$i \in I$, for all $i \in I$. 

An interventional data point $X_\intv^{(i)}$, with intervention target
$T^{(i)} = I \subseteq \{1,\ldots ,p\}$ and corresponding intervention
value $U_{I}^{(i)}$, then has density $f_{\intv}(x|\doop(X_{I}^{(i)}  =
U_I^{(i)}))$ from (\ref{eqn:trunc-factorization}). Thus, in other words, the
intervention distribution $P_{\intv}^{(i)}$ is characterized by the
Gaussian density in (\ref{eqn:trunc-factorization}). This, together with the
specific form of the Gaussian observational distribution (see also
(\ref{eqn:factorization})), fully specifies the model in (\ref{eqn:basic-model})
which then reads as:
\begin{align}
    &X_{\obs}^{(1)},\ldots ,X_{\obs}^{(\nobs)}\ \mbox{i.i.d.}\ \sim f_{\obs}(x)\dd{x}\ \mbox{as in (\ref{eqn:factorization})},\nonumber\\
    &X_{\intv}^{(1)},\ldots , X_{\mathrm{int}}^{(n_{\intv})}\ \mbox{independent,
    and independent of $X_\obs^{(1)}, \ldots, X_\obs^{(\nobs)}$}, \nonumber\\
    & X_{\intv}^{(i)} \sim f_{\intv}(x|\doop(X_{T^{(i)}} = U_{T^{(i)}}^{(i)})) \dd{x}\
    \mbox{as in (\ref{eqn:trunc-factorization})}\nonumber\\ 
    & U^{(1)}, \ldots, U^{(n_{\intv})}\ \mbox{independent,
    and independent of $X_\obs^{(1)}, \ldots, X_\obs^{(\nobs)}$}, \nonumber\\
    & U^{(i)} \sim \mvn\left(\mu^{(i)}_U, \diag(\tau_1^{(i)2}, \ldots, \tau_p^{(i)2})\right)
    \label{eqn:basic-model2}
\end{align}
The true underlying parameters and quantities in the model
(\ref{eqn:basic-model2}) are denoted by $\mu_0$, $\Sigma_0$, $\mu^{(i)}_{0,U}$, $\{\tau_{0,j}^{(i)2}\}_j$
and the true DAG $D_0$. It is well known, see also Section
\ref{sec:structure-learning}, that $D_0$ is typically not identifiable from the
observational and a few interventional distributions. 

\subsubsection{Structural equation model}

The model in (\ref{eqn:basic-model2}) (or in (\ref{eqn:basic-model})) can be alternatively written
as a linear structural equation model thanks to the Gaussian
assumption. The observational variables can be represented as
\begin{equation}
    \label{eqn:structural-eq1}
    X_{\obs,k} = \sum_{j=1}^p \beta_{kj} X_{\obs,j} + \eps_k,\ \eps_k \sim
    \mvn(0, \sigma^2_k)\ (k = 1, \ldots, p),
\end{equation}
where $\beta_{kj} = 0$ if $j \notin \pa(k) = \pa_D(k)$ and $\eps_1,\ldots
,\eps_n$ are independent and $\eps_k$ independent of
$X_{\obs,\pa(k)}$. Using the matrix $B = (\beta_{kj})_{k,j=1}^p$ with 
\begin{equation}
    \label{eqn:weight-matrices}
    B \in \mathbf{B}(D) := \{A = (\alpha_{kj}) \in \R^{p \times p};\
    \alpha_{kj} = 0\ \mbox{if}\ j \notin \pa_D(k)\},
\end{equation}
we can write 
$$
    X_{\obs} = B X_{\obs} + \eps,\ \eps \sim \mvn_p(0,\mbox{diag}(\sigma_1^2, \ldots,\sigma_p^2)).
$$

An interventional setting with intervention $\doop(X_I = U_I)$ (and
intervention target $T = I$) can be
represented as follows: 
\begin{equation}
    \label{eqn:structural-eq2}
    X_{\intv,k} = 
    \begin{cases}
        \sum_{j \notin I} \beta_{kj} X_{\intv,j}
        + \sum_{j \in I} \beta_{kj} U_j + \eps_k &, \text{if }
        k \notin I, \\
        U_k &, \text{if } k \in I,\\
    \end{cases}
\end{equation}
with $\beta_{kj}$ and $\eps_k$ as in (\ref{eqn:structural-eq1}) with the additional property that $U$ is independent of $X_{\obs}$ and $\eps$.

Thus, the model in (\ref{eqn:basic-model2}) is given as 
\begin{align}
    &X_{\obs}^{(1)},\ldots ,X_{\obs}^{(\nobs)}\ \mbox{i.i.d. as in
    (\ref{eqn:structural-eq1})},\nonumber\\ 
    &X_{\intv}^{(1)},\ldots , X_{\mathrm{int}}^{(n_{\intv})}\ \mbox{independent,
    and independent of } X^{(1)}, \ldots, X^{(\nobs)}, \nonumber\\
    &X_{\intv}^{(i)}\ \mbox{as in (\ref{eqn:structural-eq2}) with intervention target}\
    I = T^{(i)},\nonumber\\
    & U^{(1)}, \ldots, U^{(n_{\intv})}\ \mbox{independent,
    and independent of $X_\obs^{(1)}, \ldots, X_\obs^{(\nobs)}$}, \nonumber\\
    & U^{(i)} \sim \mvn\left(\mu^{(i)}_U, \diag(\tau_1^{(i)2}, \ldots, \tau_p^{(i)2})\right)
    \label{eqn:structural-eq3}
\end{align}
It also holds that for $\eps$ in (\ref{eqn:structural-eq1}), $\eps^{(1)},\ldots
,\eps^{(n)}$ are independent of $U^{(1)},\ldots ,U^{(n)}$. As before, we
denote the true underlying quantities by $B_0$, $\{\sigma_{0,k}^2\}_k$, $\mu_{0,U}$, $\tau_0^2$ and the true DAG $D_0$.  Because of the causal interpretation of the DAG model, we call a model as in (\ref{eqn:structural-eq3}) or (\ref{eqn:basic-model2}) a Gaussian causal model in the following.

\subsection{Maximum likelihood estimation when the DAG is given}
\label{sec:mle-given-dag}

The likelihood for the Gaussian model (\ref{eqn:basic-model2}) is parameterized
by the covariance matrix $\Sigma$ of $P_{\obs} = {\cal N}_p(0,\Sigma)$,
the DAG $D$ and the parameters $\mu^{(i)}_U,\tau^{(i)2}$ for the stochastic 
intervention values $U^{(i)}$. Alternatively, and the route taken here, we can
use the linear structural equation model and parameterize the likelihood
with the coefficient matrix $B$, the error variances $\sigma_1^2,\ldots
,\sigma_p^2$, and $\mu^{(i)}_U,\tau^{(i)2}$. Using this, the matrix $B$ is constrained
such that its non-zero elements are corresponding to the directed edges in
the DAG $D$.

For a given DAG $D$, it is rather straightforward to derive the maximum
likelihood estimator, as discussed below. Much more involved is the issue
of structure learning when the DAG $D$ is unknown: there we want to
estimate a suitable Markov equivalence of the unknown DAG, as discussed in
Section \ref{sec:structure-learning}.

It is easy to see that the log-likelihood for $\mu^{(i)}_{U_j}$, $\tau_j^{(i)2}$ decouples from the remaining parameters, and we
regard $\mu^{(i)}_{U_j},\ \tau_j^{(i)2}$ (for all $i$ and $j$) as nuisance parameters. 

In the sequel, we unify the notation and denote an observational data point
with the intervention target $I = \emptyset$. We can then write 
the distribution of $X_\intv|\doop(X_I = U_I)$ as:
\begin{align}
    X | \doop(X_I & = U_I) \sim {\cal N}(\mu^{(I)}, \Sigma^{(I)}), \label{eqn:int-dist} \\
    \mu^{(I)} & = \textstyle \left(\mId - R^{(I)} B\right)^{-1} Q^{(I)\transp} \mu_{U_I}, \nonumber \\
    \Sigma^{(I)} & = \textstyle \left(\mId - R^{(I)} B\right)^{-1} \left[R^{(I)} \diag(\sigma^2) R^{(I)} + Q^{(I)\transp} \diag(\tau_I^2) Q^{(I)}\right] \left(\mId - R^{(I)} B\right)^{-\transp}. \nonumber
\end{align}
Thereby, we have used the following matrices:
\begin{alignat}{3}
    & P^{(I)}: & \ \R^p &\to \R^{p - |I|}, & x & \mapsto x_{I^c}, \nonumber \\
    & Q^{(I)}: & \R^p &\to \R^{|I|}, & x & \mapsto x_I, \label{eqn:help-matrices} \\
    & R^{(I)}: & \R^p &\to \R^p, & R^{(I)} & := P^{(I)\transp} P^{(I)}. \nonumber
\end{alignat}
The Gaussian distribution in (\ref{eqn:int-dist}) is a direct consequence of (\ref{eqn:structural-eq2}), which can be rewritten in vector-matrix notation as
$$
    X_\intv = R^{(I)} \left(BX_\intv + \eps\right) + Q^{(I)\transp} U \ .
$$
Denoting the intervention target for the $i$th data point $X^{(i)}$ by
$T^{(i)}$, and the total
sample size as $n = n_{\obs} + n_{\intv}$, the log-likelihood (conditional
on $U^{(1)},\ldots ,U^{(n)}$) becomes
$$
  \ell_D(B, \{\sigma_k^2\}_k, \{\mu^{(i)}_U\}_i, \{\tau^{(i)2}\}_i; T^{(1)}, \ldots, T^{(n)}, X^{(1)}, \ldots , X^{(n)}) 
  = \sum_{i=1}^n \log f_{\mu^{(T^{(i)})},\Sigma^{(T^{(i)})}}(X^{(i)}),
$$
where $f_{\mu^{(T^{(i)})},\Sigma^{(T^{(i)})}}$ denotes the density of $\mvn(\mu^{(T^{(i)})}, \Sigma^{(T^{(i)})})$ in (\ref{eqn:int-dist}) which depends on $B$, $\{\sigma_k^2\}_k$, $\{\mu^{(i)}_U\}_i$ and $\{\tau^{(i)2}\}_i$.  To make notation shorter, we will denote by $\mathcal{T}$ the sequence of intervention targets $T^{(1)}, \ldots, T^{(n)}$ in the following, and by $\mathbf{X}$ the data matrix consisting of the rows $X^{(1)}$ to $X^{(n)}$.

For a given DAG structure $D$, implying certain zeroes in $B \in
\mathbf{B}(D)$ through the space $\mathbf{B}(D)$ in (\ref{eqn:weight-matrices}), the
maximum likelihood estimator is defined as: 
\begin{equation}
  \label{eqn:MLED}
  \hat{B}(D),\{\hat{\sigma}_k^2(D)\}_k = \argmin_{\substack{B \in \mathbf{B}(D)\\\{\sigma_i^2\} \in (\R^{+})^p}} - \ell_D(B,\{\sigma_i^2\}_i; \mathcal{T}, \mathbf{X}). 
\end{equation}
The expressions $\hat{B}(D),\{\hat{\sigma}_k(D)^2\}_k$ have an explicit
form as described in Section \ref{sec:proofs-mle}; the nuisance parameters $\{\mu^{(i)}_U\}_i, \{\tau^{(i)2}\}_i$ do not appear in (\ref{eqn:MLED}) any more since the minimizer of the likelihood does not depend on them.

\section{Estimation of the interventional Markov equivalence class}
\label{sec:structure-learning}

Consider the model in (\ref{eqn:basic-model2}) or (\ref{eqn:structural-eq3}). It is well known that one cannot identify the underlying DAG $D_0$ from $P_{\obs} = P_{0,\obs}$. However, assuming e.g. faithfulness of the distribution as in (\ref{eqn:pobs}), one can identify the observational Markov equivalence class ${\cal E}(D_0) = {\cal E}(P_{\obs})$ from $P_{\obs}$, see for example \citet{Spirtes2000Causation} or \citet{Pearl2000Causality}. 

\subsection{Characterizing the interventional Markov equivalence class}
\label{sec:markov-equivalence-class}

The power of interventional data is that we can identify more than the
observational Markov equivalence class, namely the smaller
interventional Markov equivalence classes \citep{Hauser2012Characterization}. Regarding the
latter, we consider a family of intervention targets, a subset of the 
powerset of the vertices $\{1, \ldots, p\}$: $\mcI \subset \mathcal{P}(\{1, \ldots, p\})$. 
In our context $\mcI = \{T^{(i)} \subseteq \{1, \ldots, p\};\ i=1, \ldots, n\}$ is the set of intervention targets of the $n_{\intv}$ interventional data together with the empty set $\emptyset$ as long as we have at least one observational data point ($\nobs > 0$).

A family of targets \mcI{} is called conservative if for all $j \in \{1, \ldots, p\}$, there is some $I \in {\cal I}$ such that $j \notin I$. The simplest such family is $\mcI = \{\emptyset\}$, i.e., observational data only. Furthermore, every \mcI{} arising from an ensemble of observational and interventional data is a conservative family of targets as well. The issue that a family of targets should be conservative is crucial for characterization of interventional Markov equivalence classes \citep{Hauser2012Characterization}, and with having jointly observational and interventional data in mind, it is not really a restriction. 

The example in Figure \ref{fig:ex-equivalent-dags} shows three DAGs that are observationally Markov equivalent since they have the same skeleton (i.e., they yield the same undirected graph if all directed edges are replaced by undirected ones) and the same v-structures (i.e., induced subgrahps of the form $a \grarright b \grarleft c$) \citep{Verma1990Equivalence}.  If we have, in addition to observational data, data from an intervention at vertex $4$, the orientations of the arrows incident to the intervened vertex become identifiable.  Technically speaking, the interventional Markov equivalence class under the family of targets $\mcI = \{\emptyset, \{4\}\}$ is \emph{smaller} than the observational Markov equivalence class.

\begin{figure}[t]
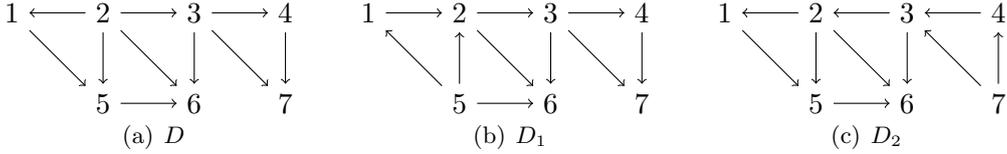

  \centering
  \subfigure[$D$]{%
    \begin{exgraphpicture}
      \draw[->] (v2) -- (v1);
      \draw[->] (v2) -- (v3);
      \draw[->] (v3) -- (v4);
      \draw[->] (v1) -- (v5);
      \draw[->] (v2) -- (v5);
      \draw[->] (v2) -- (v6);
      \draw[->] (v3) -- (v6);
      \draw[->] (v5) -- (v6);
      \draw[->] (v3) -- (v7);
      \draw[->] (v4) -- (v7);
    \end{exgraphpicture}
  } \quad
  \subfigure[$D_1$]{%
    \begin{exgraphpicture}
      \draw[->] (v5) -- (v1);
      \draw[->] (v1) -- (v2);
      \draw[->] (v5) -- (v2);
      \draw[->] (v2) -- (v3);
      \draw[->] (v3) -- (v4);
      \draw[->] (v2) -- (v6);
      \draw[->] (v3) -- (v6);
      \draw[->] (v5) -- (v6);
      \draw[->] (v3) -- (v7);
      \draw[->] (v4) -- (v7);
    \end{exgraphpicture}
  } \quad
  \subfigure[$D_2$]{%
    \begin{exgraphpicture}
      \draw[->] (v2) -- (v1);
      \draw[->] (v3) -- (v2);
      \draw[->] (v4) -- (v3);
      \draw[->] (v7) -- (v3);
      \draw[->] (v7) -- (v4);
      \draw[->] (v1) -- (v5);
      \draw[->] (v2) -- (v5);
      \draw[->] (v2) -- (v6);
      \draw[->] (v3) -- (v6);
      \draw[->] (v5) -- (v6);
    \end{exgraphpicture}
  }
  \caption{Three DAGs having equal skeletons and a single v-structure, $3 \grarright 6 \grarleft 5$, hence being observationally Markov equivalent.  Under the family of intervention targets $\mcI = \{\emptyset, \{4\}\}$, $D$ and $D_1$ are still (interventionally) Markov equivalent (i.e., statistically indistinguishable), while $D$ and $D_2$ belong to different interventional Markov equivalence classes.}
  \label{fig:ex-equivalent-dags}
\end{figure}

The general definition of an interventional Markov equivalence class is given in Section \ref{sec:def-int-equivalence-class}. The interventional Markov equivalence class ${\cal E}_{\cal I}(D_0)$ is identifiable from $P_{0,\obs}$ in (\ref{eqn:pobs}) and the interventional distributions, given by $f_{\intv}(x|\doop(X_{I} = U)) dx$ in (\ref{eqn:basic-model2}) for all $I \in {\cal I}$, assuming faithfulness as in assumptions (A1) and (A2) below.  In \citet{Hauser2012Characterization}, the interventional Markov equivalence class of a DAG $D$ for a conservative family of intervention targets \mcI{} is rigorously characterized in terms of a chain graph with directed and undirected edges, the so-called interventional essential graph or \mcI-essential graph.

\subsection{Structure learning using BIC-score}
\label{sec:structure-learning-bic}

For estimating the structure and the parameters of the interventional
Markov-equivalence class, we consider the penalized maximum-likelihood
estimator using the BIC-score. Denote by $\hat{B}(D)$ and
$\{\hat{\sigma}_k^2(D)\}_k$ the maximum-likelihood estimators for a given DAG
$D$, as in (\ref{eqn:MLED}). An estimate for the interventional
Markov-equivalence class is then:
\begin{align}
  & \hat{{\cal E}}_\mcI = \argmin_{\mathcal{E_I}(D)} - \ell_D(\hat{B}(D),
  \{\hat{\sigma}_k^2(D)\}_k; \data) + \frac{1}{2} \log(n) \dim({\cal E}_{\cal
  I}(D)),\label{eqn:MLEEQ}\\
  & \dim({\cal E}_{\cal I}(D)) = \dim(D) = \mbox{number of
  non-zero elements in}\ \hat{B}(D).\nonumber
\end{align}
The optimization is over all interventional Markov equivalence classes
with corresponding DAGs $D$, see also Section \ref{sec:computation} below. 

We note that the $\ell_0$-penalty has the property that the score remains
invariant for all members in the interventional Markov equivalence class
${\cal E}_{\cal I}(D)$: this property is not true for some other penalties such
as the $\ell_1$-norm. We outline in Section \ref{sec:computation} a
computational algorithm for computing the estimator in (\ref{eqn:MLEEQ}). 

We now justify the estimator in (\ref{eqn:MLEEQ}) by providing a consistency
result. We make the following assumptions.
\begin{description}
  \item[(A1)] The true bservational distribution $P_{0,\obs}$ in (\ref{eqn:pobs}), or equivalently the distribution of $X_{\obs} \sim f_\obs(x)\dd{x}$ in (\ref{eqn:basic-model2}) is faithful with respect to the true underlying DAG $D_0$.
  
  \item[(A2)] The true interventional distributions of $X_\intv^{(i)} \sim f_\intv(x|\doop(X_{T^{(i)}} = U^{(i)})) \dd{x}$ in (\ref{eqn:basic-model2}) are faithful with respect to the true underlying intervention DAG $D_{0,T^{(i)}}$, for all $i=1, \ldots, \nint$ (for the definition of the intervention DAG, see Section \ref{sec:docalc}.
\end{description}
The faithfulness assumption means that all marginal and conditional
independencies can be read off from the DAG, here $D_0$ or $D_{0,T^{(i)}}$,
respectively \citep{Spirtes2000Causation}. This is a stronger requirement than a Markov
assumption which allows to infer some conditional independencies from the
DAG $D_0$ or $D_{0,T^{(i)}}$.

In our case with a data set arising from different interventions, we do not have identically distributed data, as it is evident for example from equation (\ref{eqn:int-dist}).  To be able to make a precise consistency statement for the estimator (\ref{eqn:MLEEQ}), we regard the sequence of intervention targets as \emph{random}:
\begin{description}
  \item[(A3)] The intervention targets $T^{(1)}, \ldots, T^{(n)}$ are $n$ i.i.d.\ realizations of a random variable $T$ taking values in \mcI: $P[T = I] = w_I > 0$ for all $I \in \mcI$.
\end{description}
In Section \ref{sec:mle-given-dag}, we have already seen that the parameters $\mu^{(i)}_{U_j},\ \tau_j^{(i)2}$ (for all $i$ and $j$) are nuisance parameters.  They do not belong the statistical model, but describe the experimental setting (i.e., the interventions).  With assumption (A3), we introduce an additional, ``artificial'' set of nuisance parameters describing the experimental setting.  By this approach, we can model the sequence $(T^{(i)}, X^{(i)})_{i = 1}^n$ as independent realizations of random variables $(T, X) \in \mcI \times \sR^p$ with the following distribution:
$$
    P[T = I] = w_I, \quad f(x \spst T = I) = f_\intv(x \spst \doop(X_I = U_I))\ .
$$
\begin{theorem}
  \label{thm:consistency}
  Consider model (\ref{eqn:basic-model2}) with the family of intervention targets \mcI{}.  Assume (A1), (A2) and (A3).  Then:  as $n \to \infty$,
  $$
    \PP[\hat{\cal E}_\mcI = {\cal E}_\mcI(D_0)] \to 1.
  $$
\end{theorem}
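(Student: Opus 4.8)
The plan is to mimic the classical argument for consistency of the BIC in observational Gaussian DAG models (as in \citet{Chickering2002Optimal} and the dimension/Laplace analysis behind the BIC), but carried out for the joint distribution of the pair $(T, X)$ rather than for $X$ alone. The crucial simplification comes from assumption (A3): although the interventional data points are not identically distributed, the pairs $(T^{(i)}, X^{(i)})$ are i.i.d.\ draws from the mixture distribution with density $w_I\, f_\intv(x \spst \doop(X_I = U_I))$ on $\mcI \times \R^p$. This restores an i.i.d.\ setting to which standard large-sample likelihood theory applies. Since $p$ is fixed, there are only finitely many DAGs and finitely many interventional equivalence classes, so it suffices to compare the true class against each competitor separately and take a union bound over a finite set.

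First I would analyze the normalized maximized log-likelihood. For an arbitrary DAG $D$, writing $\hat\theta(D) = (\hat B(D), \{\hat\sigma_k^2(D)\}_k)$ for the MLE of (\ref{eqn:MLED}), the law of large numbers gives that $\tfrac{1}{n}\ell_D(\hat\theta(D); \data)$ converges to $\sup_{\theta}\Exp[\log f_{D,\theta}(X \spst T)]$, the negative cross-entropy of the best $D$-representable model against the truth. Equivalently, the per-sample likelihood gap relative to the truth converges to the minimal Kullback--Leibler divergence $K(D) := \inf_\theta \Exp_T\bigl[\mathrm{KL}\bigl(P^{(T)}_\intv \,\|\, P^{(T)}_{D,\theta}\bigr)\bigr]$, the outer expectation being over $T \sim (w_I)$. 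The key dichotomy is then: $K(D) = 0$ precisely when the true interventional distribution $P^{(I)}_\intv$ is Markov with respect to the intervention DAG $D_I$ for every $I \in \mcI$ (i.e.\ when $D$ is an \mcI-I-map of $D_0$), and $K(D) > 0$ otherwise.

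Second I would split the competing classes into two regimes. For a DAG $D$ that is not an \mcI-I-map, $K(D) > 0$, so the log-likelihood deficit against $D_0$ grows linearly, $\ell_{D_0}(\hat\theta(D_0)) - \ell_D(\hat\theta(D)) = n\,K(D) + o_P(n)$, which dominates the $O(\log n)$ difference of the BIC penalties; such $D$ are eliminated with probability tending to one. For a DAG $D$ that is an \mcI-I-map but has strictly more edges than $D_0$ (hence $\dim(D) > \dim(D_0)$), both models can represent the truth, so by Wilks-type asymptotics the log-likelihood difference is only $O_P(1)$, whereas the penalty difference $\tfrac12\log(n)\,(\dim(D) - \dim(D_0))$ diverges; these overfitting models are likewise eliminated. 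It then remains to invoke faithfulness: under (A1)--(A2) the minimal \mcI-I-maps of $D_0$ are exactly the members of $\mathcal{E}_\mcI(D_0)$, by the characterization of interventional Markov equivalence in \citet{Hauser2012Characterization}. Because the BIC score is constant across a whole interventional equivalence class (equal dimension and equal maximized likelihood, as noted after (\ref{eqn:MLEEQ})), the minimizer coincides with $\mathcal{E}_\mcI(D_0)$ with probability tending to one.

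The main obstacle I anticipate is the faithfulness-driven identification step: showing that the conditions ``$K(D) = 0$'' and ``$D$ is a minimal \mcI-I-map'' carve out precisely $\mathcal{E}_\mcI(D_0)$, and in particular that any structural deficiency of a non-I-map actually forces $K(D) > 0$. This is where (A1), (A2) and the strict positivity $w_I > 0$ in (A3) are indispensable: faithfulness guarantees that an edge absent in $D$ but required by $D_0$ manifests as a genuine conditional dependence in at least one $P^{(I)}_\intv$ with $I \in \mcI$, so that no Gaussian parameter choice in $\mathbf{B}(D)$ matches the truth and the divergence cannot vanish. A secondary technical point to handle carefully is the decoupling of the intervention nuisance parameters $\mu^{(i)}_U, \tau^{(i)2}$, which must be shown not to disturb the law-of-large-numbers limit of the profile likelihood used above.
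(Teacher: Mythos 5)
Your overall skeleton is the same as the paper's: use (A3) to view the pairs $(T^{(i)}, X^{(i)})$ as i.i.d.\ draws from a finite mixture (the paper formalizes this in Proposition~\ref{prop:mixing-exponential-family}, showing the joint law of $(T,X)$ is itself an exponential family), compare the true model against the finitely many competitors, and invoke faithfulness at the end to identify the surviving minimal I-maps with ${\cal E}_\mcI(D_0)$. The difficulty is that the two asymptotic claims you treat as routine are exactly the substance of the paper's proof, and as stated both have genuine gaps. The paper does not re-derive a two-regime argument by hand; it verifies the hypotheses of Haughton's theorem (Theorem~\ref{thm:bic-consistency}) for the parameter sets $M^\mcI_D$, and essentially all of the work goes into those verifications.

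Concretely: (a) Your ``Wilks-type asymptotics'' step compares two \emph{non-nested} parameter sets that both contain the true distribution; classical Wilks theory does not apply, and an $O_P(1)$ bound on the maximized log-likelihood difference holds only if these sets are suitably regular -- in Haughton's framework, smooth manifolds embedded in the natural parameter space. Establishing this for the map $(B,\gamma) \mapsto \big(K^{(I)}\big)_{I \in \mcI}$ is the paper's Proposition~\ref{prop:smooth-embedding}, and each required property (injectivity, continuity of the inverse, the immersion property) is proved using the \emph{conservativeness} of \mcI{} (for every vertex $a$ there is $I \in \mcI$ with $a \notin I$); your argument never engages with this structural property, without which the parameterization is not even injective (for the non-conservative family $\mcI = \{\{1,\ldots,p\}\}$ the interventional precision matrices do not depend on $(B,\gamma)$ at all). (b) Your dichotomy ``$K(D)=0$ iff $D$ is an \mcI-I-map'' silently assumes the infimum defining $K(D)$ is attained. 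The set of distributions realizable by $D$ is not closed: a sequence of parameters with degenerating error variances can drive the Kullback--Leibler divergence to zero even though no parameter in $\mathbf{B}(D) \times \sR^p_{>0}$ represents the truth. In that case $D$ is not an I-map yet $K(D)=0$, the likelihood deficit is only $o_P(n)$, and your linear-separation step no longer dominates the $\tfrac{1}{2}\log n$ penalty gap. Ruling this out is the paper's Proposition~\ref{prop:models-separated} (Haughton's closure condition), proved again via conservativeness, which forces the limiting $\gamma$ to stay strictly positive. Finally, you identify the faithfulness step as the main obstacle, but given the characterization in \citet{Hauser2012Characterization} that step is the easy part; the real content of the proof is the differential-geometric regularity in (a) and (b).
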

A proof is given in Section \ref{sec:proof-consistency}. The result might not be surprising in view of model selection consistency results of BIC for curved exponential family models \citep{Haughton1988Choice}. However, a careful analysis is needed to cope with the special situation of data arising from different interventions and hence different distributions.

\bigskip\noindent
\textbf{Remark 1.} A version of Theorem \ref{thm:consistency} also holds without the
faithfulness assumptions (A1) and (A2). 

We define an \emph{independence map} as a DAG $D^*$ such that the observational distribution in (\ref{eqn:pobs}) (or equivalently the distribution of $X_{\obs} \sim f_{\obs}(x)\dd{x}$ in (\ref{eqn:basic-model2})) and all interventional distributions of $X_{\intv}^{(i)} \sim f_{\intv}(x|\doop(X_{T^{(i)}} = U^{(i)})) \dd{x}$ in (\ref{eqn:basic-model2}), for all $T^{(i)}$, can be generated by $D^*$ and the corresponding intervention DAGs $D^*_{T^{(i)}}$.  This is a generalization of an independence map for observational data \citep{Pearl1988Probabilistic}.  A \emph{minimum} independence map is an independence map having a minimum number of edges.  A minimum independence map is typically not unique, and assuming faithfulness in (A1) and (A2), the set of all minimum independence maps equals the interventional Markov equivalence class ${\cal E}_{{\cal I}_{\emptyset}}(D_0)$ with ${\cal I} = \{T^{(i)};\ i=1,\ldots ,\nint\}$. 

Instead of (\ref{eqn:MLEEQ}) consider the estimator 
$$
  \hat{D} = \argmin_D - \ell_D(\hat{B}(D), \{\hat{\sigma}_k^2(D)\}_k; \data) + \frac{1}{2} \log(n) \dim(D),
$$
where the optimization is over all DAGs $D$. The statement in Theorem
\ref{thm:consistency} can then be replaced by:
$$
\PP[\hat{D}\ \mbox{is a minimum independence map}]
\to 1.
$$

\bigskip\noindent
\textbf{Remark 2.} Although we have data sets with both observational and interventional data in mind, note that Theorem \ref{thm:bic-consistency} only makes the assumption of a \emph{conservative} family of intervention targets.    In other words, consistent model selection is even possible with interventional data alone.

\bigskip
Let $I \in \mcI \setminus \{\emptyset\}$ be an intervention target, and denote by $n_I = |\{i; T^{(i)} = I, i = 1, \ldots, n\}|$ the number of interventional data for this target.  Assumption (A3) made in the theorem implies $n_I \asymp n \to \infty$.  This might not be realistic in practice since there is often only one (or very few) interventional data point for each target $I$, i.e., $n_I = 1$ (or $n_I$ is small). Without having a rigorous proof, the consistency result of Theorem \ref{thm:consistency} is expected to hold if the intervention value $U^{(i)}$ is far away from zero, i.e., far away from the mean of $X_{T^{(i)}}$. The heuristics can be exemplified as follows.

\medskip\noindent
\textbf{Example 1.} Consider a DAG $D_0 = 1 \grarright 2$ with $p=2$ and
corresponding observational distribution from the structural equation model
\begin{align*}
  &X_1 \sim {\cal N}(0,\sigma_1^2),\\
  &X_2 \leftarrow \beta X_1 + \eps_2,\ \eps_2 \sim {\cal N}(0,\sigma_2^2).
\end{align*}
Then, the interventional distribution with target $I=1$ equals:
\begin{eqnarray}
\label{e1}
X_2|\doop(X_1 = u) \sim {\cal N}(\beta u,\sigma_2^2),
\end{eqnarray}
whereas the marginal observational distribution is
\begin{eqnarray}\label{e2}
X_2 \sim {\cal N}(0,\sigma_1^2 + \beta^2 \sigma_2^2).
\end{eqnarray}
Thus, if $u \to \infty$, the means of the distributions in (\ref{e1}) and
(\ref{e2}) drift away from each other and one realization from the
intervention in (\ref{e1}) would be sufficient such that with probability
tending to 1 as $u \to \infty$, we could detect the difference
from (one or many realizations of) the observational distribution  in
(\ref{e2}). 

Alternatively, if $u = 0$, we could detect differences of the distributions
in (\ref{e1}) and (\ref{e2}) in terms of their variances. But we would need
many realizations from (\ref{e1}) and (\ref{e2}) to detect this difference
with high probability. 

Although obvious, we note that if the true DAG would be $1 \grarleft 2$,
the distribution in (\ref{e1}) and (\ref{e2}) would coincide (being equal
to ${\cal N}(0,\mathrm{Var}(X_2))$. Therefore, when doing an intervention
$\doop(X_1=u)$ and we see a difference in comparison to the marginal
distribution of $X_2$, the true DAG must be $1 \grarright 2$. 

\medskip
We refer to empirical results in Section \ref{sec:simulations} which confirm
good model selection properties if $\nobs$ is large, $n_I = 1$ but with
intervention values $U$ chosen sufficiently far away from zero. 

\subsection{Computation}
\label{sec:computation}

The computation of the estimator in (\ref{eqn:MLEEQ}) is a highly non-trivial
task. The main difficulty comes from the fact we have to optimize over all
Markov equivalence classes. We can reformulate the optimization as follows:
$$
  \hat{B}, \{\hat{\sigma}_k^2\}_k = \argmin_{B \in {\bf B}_{\mathrm{DAG}};
  \{\sigma_k^2\}_k} - \ell(B,\{\sigma_k^2\}_k; \data) + \frac{1}{2} \log(n) \dim(B)
$$
where $-\ell(\cdot; \data)$ is the negative log-likelihood in the model
(\ref{eqn:structural-eq3}), and ${\bf B}_{\mathrm{DAG}}$ is the space of
matrices satisfying the constraint that they correspond to a DAG.
This DAG-constraint causes the optimization to be highly non-convex. In 
view of this, the $\ell_0$-penalty is not adding major new computational
challenges (and in fact allows for dynamic programming optimization, see
below) while it enjoys nice statistical properties and leading to a
score (value of the objective function) which is the same for all DAG
members in an interventional Markov equivalence class.

Somewhat surprisingly, although the optimization problem in (\ref{eqn:MLEEQ})
is NP-hard \citep{Chickering1996Learning}, dynamic programming can be 
used for exhaustive optimization \citep{Silander2006Simple}, roughly
as long as $p$ is less than say 20. For problems with larger dimension,
the optimization in (\ref{eqn:MLEEQ}) can be pursued using
greedy algorithms. Based on the idea from \citet{Chickering2002Learning,Chickering2002Optimal}, one can
use a greedy forward, backward and turning arrows algorithm which pursues each
greedy step in the space of interventional Markov equivalence classes which
is the much more appropriate space than the space of DAGs. An efficient
implementation of such an algorithm, called Greedy Interventional
Equivalent Search (GIES), is rigorously described in
\citet{Hauser2012Characterization} where algorithmic properties, 
theoretical and empirical, are
reported in detail. Although there is no guarantee that GIES converges to a
global optimum, it seems very competitive and keeps up with dynamic
programming for small-scale problems. An implementation of GIES is
available in the \texttt{R}-package \texttt{pcalg} which is used throughout
in Section \ref{sec:empirical-results}. 

\section{Empirical results}
\label{sec:empirical-results}

We evaluated $\ell_0$-penalized maximum likelihood estimation of interventional Markov equivalence classes as described in Section \ref{sec:structure-learning} on a real data set (Section \ref{sec:sachs}) as well as on simulated data (Section \ref{sec:simulations}).

\subsection{Analysis of protein-signaling data}
\label{sec:sachs}

We analyzed the protein-signaling data set of \citet{Sachs2005Causal}.  This data set contains 7466 measurements of the abundance of 11 phosphoproteins and phospholipids recorded under different experimental conditions in primary human immune system cells.  The different experimental conditions are characterized by associated reagens that inhibit or activate signaling nodes, corresponding to interventions at different points of the protein-signaling network.  Interventions mostly take place at more than one point, and the data set is purely interventional.  However, some of the experimental perturbations affect receptor enzymes instead of (measured) signaling molecules.  Since our statistical framework cannot cope with interventions at latent variables, we only considered 5846 out of the 7466 measurements which had an \emph{identical} perturbation of the receptor enzymes.  In this way, we model the system with perturbed receptor enzymes as ``ground state'', defining its distribution of molecule abundances as observational.

While we can make the data set fit our interventional framework by the aforementioned reduction to 5846 data points, the linear-Gaussian assumption of our framework may not hold, even after a log-transformation of the measurements.  Nevertheless, we fitted graphical models to the data set with different frequentist methods: GIES for the $\ell_0$-penalized MLE in (\ref{eqn:MLEEQ}) (see also Sections \ref{sec:computation} and \ref{sec:simulation-settings}), the PC algorithm \citep{Spirtes2000Causation}, the graphical LASSO \citep[GLASSO, ][]{Friedman2007Sparse}, and GIES combined with stability selection \citep{Meinshausen2010Stability}.  We varied the tuning parameter of each algorithm: the number of steps (i.e., of edge additions, deletions or reversals) in GIES, the significance level $\alpha$ in the PC algorithm, the penalty parameter $\lambda$ in GLASSO, and the cut-off selection probability in stability selection applied for GIES.  We compared the estimated models to the conventionally accepted model which serves as ground truth \citep{Sachs2005Causal}; the resulting ROC plots, both with respect to edge directions (defining true and false positives in terms of the graphs' adjacency matrices) and with respect to the skeleton alone, are depicted in Figure \ref{fig:sachs-roc}.

\begin{figure}
  \begin{center}
    \includegraphics{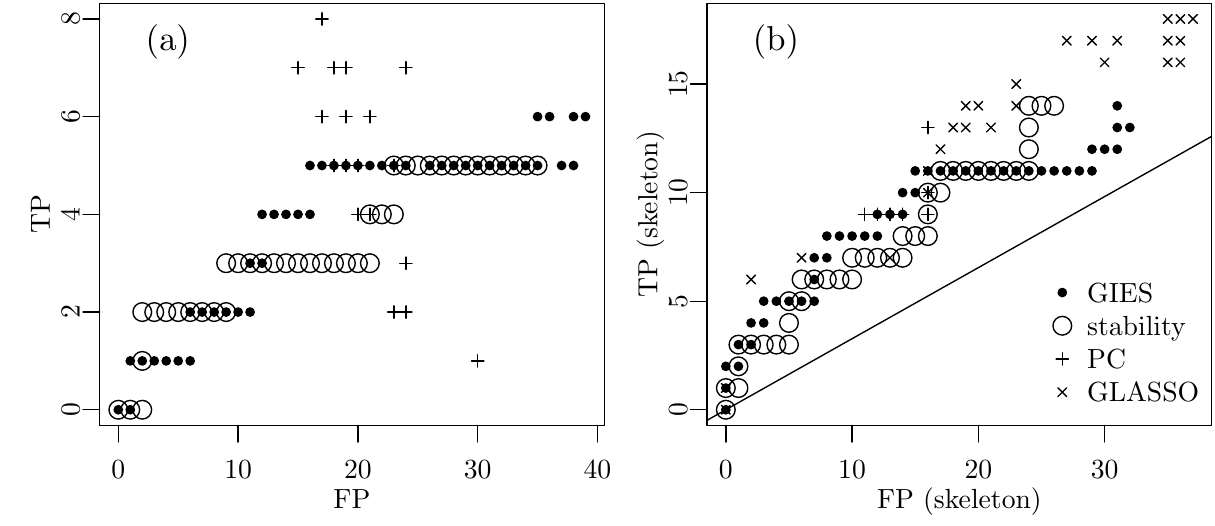}
  \end{center}
  \label{fig:sachs-roc}
  \caption{ROC plots of the models estimated from the Sachs data set, for directed edges (a) and the skeleton (b).  In (a), GLASSO is missing since it does not yield a directed model; in (b), random guessing is shown by a solid line.}
\end{figure}

The overall performance of the estimation of the \emph{skeleton} is comparable for all four algorithms (Figure \ref{fig:sachs-roc}(b)), even if two of them (PC and GLASSO) treat all data as identically distributed and disregard its interventional nature.  Regarding edge directions (Figure \ref{fig:sachs-roc}(a)), however, GIES (with or without stability selection) yields an improvement over the PC algorithm.

The Bayesian method of \citet{Cooper1999Causal} used for model fitting by \citet{Sachs2005Causal} is not directly comparable to the frequentist methods used here.  In particular, the results from \citet{Sachs2005Causal} are not easily reproducible due to choosing discretization levels and prior distribution.  Their performance as measured by comparison to the ground truth is substantially better than all methods considered in this paper ($15$ true positives, $7$ false positives in the convention of Figure \ref{fig:sachs-roc}(a)).  Potential reasons are increased robustness due to discretization and specific tuning (which is legitimate in their context of extending and improving the conventional ground truth).

\subsection{Simulations}
\label{sec:simulations}

We performed $\ell_0$-penalized maximum likelihood estimation as in (\ref{eqn:MLEEQ}) on interventional and observational data simulated from $4000$ randomly drawn Gaussian causal models (see (\ref{eqn:basic-model2}) or (\ref{eqn:structural-eq3})) to illustrate the consistency result of Theorem \ref{thm:consistency}.

\subsubsection{Experimental Settings}
\label{sec:simulation-settings}

We randomly drew DAGs whose skeleton has an expected vertex degree of $1.8$, $1.9$, $2.9$ and $3.9$ for $p = 10$, $20$, $30$ and $40$, respectively.  For every DAG $D$, we randomly generated a weight matrix $B \in \mathbf{B}(D)$ and error variances $\sigma_1^2, \ldots, \sigma_p^2$ such that the corresponding observational covariance matrix
$$
    \Sigma = \Cov(X_\obs) = (\mId - B)^{-1} \diag(\sigma^2) (\mId - B)^{-\transp}
$$
had a diagonal of $(1, \ldots, 1)$, meaning that each variable of the system had an observational marginal variance of $1$.  The procedure for generating Gaussian causal models of this form has been described in detail by \citet{Hauser2012Characterization}.

We simulated data sets with a total sample size $n = \nobs + \nint$ between $50$ and $10'000$.  We performed single-vertex interventions at $k$ randomly drawn vertices ($k = 0.2p$, $k = 0.5p$ and $k = p$), drawing $p/k$ samples under each intervention (that is, $5$, $2$ or $1$ for the chosen values of $k$).  These settings ensure that we had only $\nint = p$ interventional data points in each simulation, and that the majority of the data points were observational ones thus ($\nobs = n - p$).  This allowed us to verify our conjecture following Theorem \ref{thm:consistency} that few interventional samples are sufficient for consistent estimation of interventional Markov equivalence classes (or, equivalently, interventional essential graphs) as long as the intervention levels, the expectation values of the intervention variables $U$, are large enough.  In our simulations, we chose expecation values $\mu_{U_j}$ between $1$ and $50$ and variances of $\tau^2 = (0.2)^2$ for the intervention variables.  Note that because of the chosen normalization $\Sigma_{ii} = 1$, the expectation values $\mu_{U_j}$ can be thought of as being indicated in units of observational standard deviations.

To sum up, for each of the $4000$ randomly generated Gaussian causal models, we simulated $144$ data sets with observational and interventional data, namely one data set for each combination of the following experimental parameters:
\begin{itemize}
    \item $n \in \{50, 100, 200, 500, 1000, 2000, 5000, 10000\}$; $\nint = p, \nobs = n - p$;
    \item $k \in \{0.2p, 0.5p, p\}$;
    \item $\mu_{U_j} \in \{1, 2, 5, 10, 20, 50\}$.
\end{itemize}

We learned the structure of the underlying causal model from the simulated data sets using the BIC score as described in Section \ref{sec:computation}.  We used the two causal inference algorithms mentioned in Section \ref{sec:computation}:
\begin{itemize}
    \item an adaptation of the dynamic programming approach of \citet{Silander2006Simple} to interventional data which will be abbreviated as \SiMy{} in the following.  This algorithm guarantees to find the global minimizer of the BIC in (\ref{eqn:MLEEQ}); because of its exponential complexity, it is only applicable for models with no more than $20$ variables though.
    
    \item the Greedy Interventional Equivalence Search (\GIES) of \citet{Hauser2012Characterization}.  This algorithm \emph{greedily} optimizes the BIC score by traversing the search space of interventional Markov equivalence classes through operations corresponding to edge additions, deletions, or reversals in the space of DAGs.  The algorithm does not \emph{guarantee} to find the optimum of the BIC, but it was empirically shown for graphs with up to $p = 20$ nodes to have a performance comparable to that of \SiMy{} \citep{Hauser2012Characterization} while having polynomial runtime in the average case.
\end{itemize}

We assessed the quality of the estimated causal models with the structural Hamming distance SHD (\citealp{Tsamardinos2006Maxmin}; we use the slightly adapted version of \citealp{Kalisch2007Estimating}).  This quantity is a metric on the space of graphs.  The SHD between two graphs $G$ and $\hat{G}$ is the sum of false positives and false negatives of the skeleton and wrongly oriented edges.  Formally, if the graphs $G$ and $\hat{G}$ have adjacency matrices $A$ and $\hat{A}$, respectively, the SHD between $G$ and $\hat{G}$ is defined as
$$
    \SHD(G, \hat{G}) := \sum_{1 \leq i < j \leq p} \big(1 - \mathbbm{1}_{\{(A_{ij} = \hat{A}_{ij}) \wedge (A_{ji} = \hat{A}_{ji})\}}\big) \ .
$$

\subsubsection{Results}

Figure \ref{fig:shdess-n} shows the SHD between estimated and true interventional essential graph as a function of the total sample size $n$.  Results for different numbers of intervention targets showed similar characteristics (not shown).  The plots illustrate the consistency of the BIC, the main result of Theorem \ref{thm:consistency}.  As expected, convergence to the true equivalence class is faster the larger the intervention values (controlled by $\mu_U$) are.  Note, however, that the simulation setting does not fully match the limit setting of the theorem: while the theoretical result asks for the sample sizes $n_I$ of \emph{all} interventions $I \in \mcI$ to grow in the order $O(n)$, we always have $p$ interventional data points in our case while only the number of observational data points is growing.  In the setting with $n_I \asymp n$, \citet{Hauser2012Characterization} have already empirically shown the performance of \GIES{} as well as \SiMy{}.

\begin{figure}
  \begin{center}
    \includegraphics{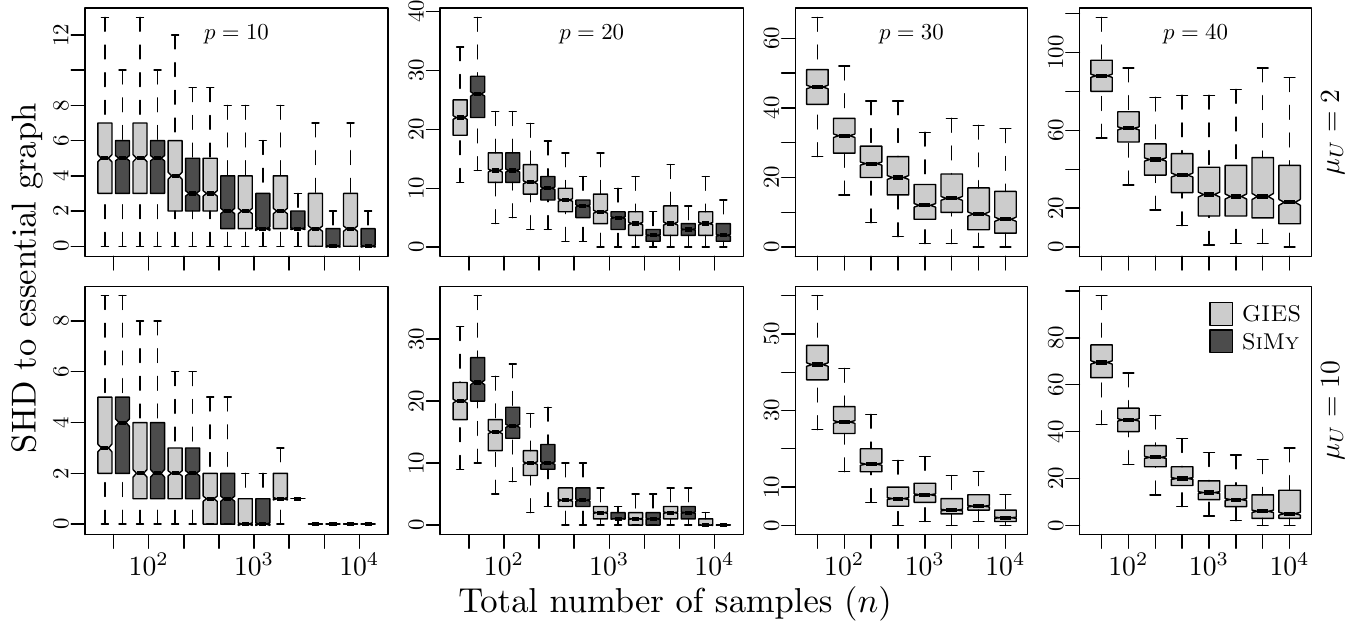}
  \end{center}
  \caption{SHD between estimated and true interventional essential graph as a function of the sample size $n$ for different numbers of variables $p$.  In each simulation, $p$ interventional data points were used, $2$ replicates for $p/2$ single-vertex intervention targets.  Interventions were performed with an expectation value of $\mu_U = 2$ (upper row) and $\mu_U = 10$ (lower row), respectively.}
  \label{fig:shdess-n}
\end{figure}

Figure \ref{fig:shdess-mu} supports our conjecture stated after Theorem \ref{thm:consistency}: even with few interventional data points (a total of $p$ in our case, compared to $n - p \gg p$ for $n = 1000$), the estimates of the causal models are substantially improved by only increasing the mean intervention values $\mu_U$.  However, for $p = 10$, this effect is not clearly visible.

\begin{figure}
  \begin{center}
    \includegraphics{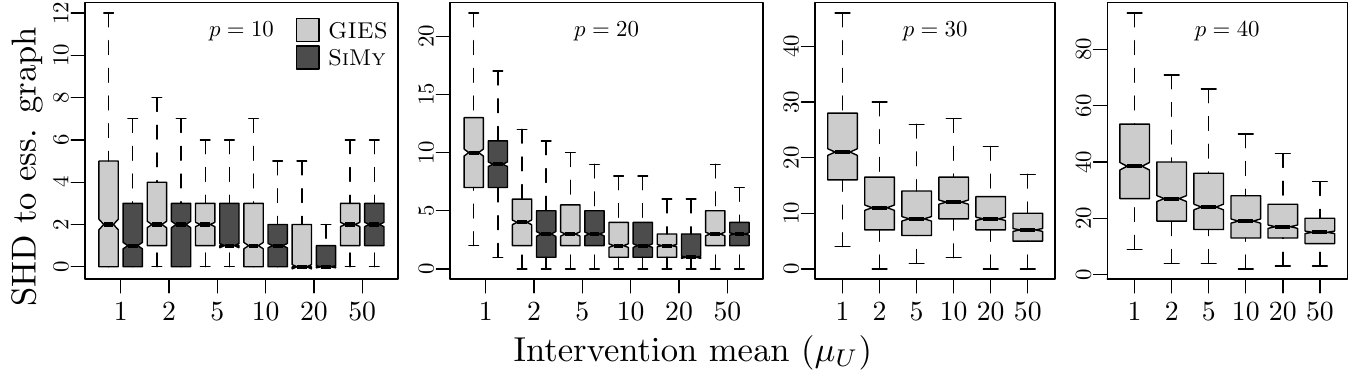}
  \end{center}
  \caption{SHD between estimated and true interventional essential graph as a function of the intervention mean $\mu_U$.  Results for simulations with a total sample size of $n = 1000$, of which $p$ data points originate from interventions at $20\%$ of the vertices.}
  \label{fig:shdess-mu}
\end{figure}

\section{Conclusions}

We have proposed a likelihood framework for joint modeling of Gaussian interventional and observational data. Such kind of data arises in many applications, notably in biology with measurements of wild-type individuals and modifications arising from interventional knock-outs of some genes.  Our likelihood approach has various interesting aspects which we summarize as follows. The parameters in the model are the observational directed acyclic graph (DAG) $D$ and the corresponding edge weights $B$ and error variances $\{\sigma^2_i\}_i$ (or instead of $B$ and $\{\sigma_i^2\}_i$ the corresponding covariance matrix of a Gaussian distribution). These parameters are global in the sense that every
intervention distribution is determined by these parameters via the do-calculus: in particular, this implies that only one or a few data per intervention suffice for reasonably accurate estimation since the corresponding distributions are all linked to the global parameters.

We show here that the BIC is consistent for estimating the corresponding interventional Markov equivalence class. The proof is rather involved since the various intervention distributions are not identical and do not easily fit into a standard setting. The interventional Markov equivalence class is an interesting and realistic target: it is smaller than the standard observational Markov equivalence class and it leads to a higher degree of identifiability when intervening at several variables. This has direct implications for tighter bounds for inferring causal effects \citep{Maathuis2009Estimating}. 

Besides the methodological development and theoretical derivations, we
present empirical results for real and simulated data. 

\section{Derivations and proofs}
\label{sec:proofs}

This section contains all proofs left out in earlier sections, namely the derivation of the maximum likelihood estimator for a given DAG (Section \ref{sec:proofs-mle}, proving results of Section \ref{sec:mle-given-dag}), and the proof of the consistency result for model selection (Section \ref{sec:proof-consistency} proving Theorem \ref{thm:consistency}).

\subsection{Explicit form of maximum likelihood estimator when DAG is known}
\label{sec:proofs-mle}

Gaussian densities form an exponential family.  The joint density of Gaussian random variables with expectation $\mu$ and covariance $\Sigma$ can be written as
\begin{equation}
  \textstyle f_{\mathcal{N}}(x; K, \nu) = (2\pi)^{-1/2} \exp \left[ \langle -\frac{1}{2} x x^\transp, K \rangle_{\mathcal{S}^p} + \langle x, \nu \rangle_{\sR^p} - \frac{1}{2}(\nu^\transp K^{-1}\nu - \log \det K) \right]\ ,
  \label{eqn:gaussian-exponential-family}
\end{equation}
where the inverse covariance matrix or precision matrix $K := \Sigma^{-1}$ and the transformed expectation value $\nu := K \mu$ form the natural parameters.  In (\ref{eqn:gaussian-exponential-family}), $\langle \smallbullet, \smallbullet \rangle_{\sR^p}$ stands for the canonical inner product on $\sR^p$, and $\langle \smallbullet, \smallbullet \rangle_{\mathcal{S}^p}$ denotes the inner product $\langle A, B \rangle_{\mathcal{S}^p} := \tr(AB)$ on the vector space $\mathcal{S}^p$ of symmetric $p \times p$ matrices.

The canonical form (\ref{eqn:gaussian-exponential-family}) of the exponential family of Gaussian distributions eases calculations with the interventional distributions (\ref{eqn:int-dist}), especially for our goal to derive a maximum likelihood estimator for a causal model with interventional data originating from \emph{different} interventions.  We hence start by calculating the natural parameters for the interventional distribution (\ref{eqn:int-dist}).  To simplify later calculations, we use the inverse error variances $\gamma_k := \sigma_k^{-2}$ to parameterize a Gaussian causal model from here on, together with the vector notation $\gamma := (\gamma_1, \ldots, \gamma_p)$.

\begin{lemma}
  \label{lem:natural-parameters}
  Let $\mu^{(I)}$ and $\Sigma^{(I)}$ be the expectation and covariance of the interventional distribution (\ref{eqn:int-dist}), respectively.  Then the following identities hold:
  \begin{align*}
    \textstyle K^{(I)} := \left(\Sigma^{(I)}\right)^{-1} & = (\mId - B)^\transp R^{(I)} \diag(\gamma) R^{(I)} (\mId - B) + Q^{(I)\transp} \tilde{K}^{(I)} Q^{(I)}\ , \\
    \nu^{(I)} := K^{(I)} \mu^{(I)} & = Q^{(I)\transp} \tilde{K}^{(I)} \mu_{U_I}\ , \\
    \textstyle \nu^{(I)\transp} \left( K^{(I)} \right)^{-1} \nu^{(I)} & = \mu_{U_I}^\transp \tilde{K}^{(I)} \mu_{U_I}\ , \\
    \log \det K^{(I)} & \textstyle = \sum_{k \notin I} \log \gamma_k + \log \det \tilde{K}^{(I)}\ .
  \end{align*}
  We make use here of the notation $\tilde{K}^{(I)} := \big( \tilde{\Sigma}^{(I)} \big)^{-1}$; $\tilde{\Sigma}^{(I)} := \diag(\tau^2_I)$ is the covariance matrix of the intervention variable $U_I$.
\end{lemma}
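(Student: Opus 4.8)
The plan is to abbreviate $A := \mId - R^{(I)} B$ and $M := R^{(I)} \diag(\sigma^2) R^{(I)} + Q^{(I)\transp} \diag(\tau_I^2) Q^{(I)}$, so that by (\ref{eqn:int-dist}) we have $\Sigma^{(I)} = A^{-1} M A^{-\transp}$, $\mu^{(I)} = A^{-1} Q^{(I)\transp} \mu_{U_I}$, and hence $K^{(I)} = A^\transp M^{-1} A$. Everything will then follow from a handful of elementary relations between the projection matrices of (\ref{eqn:help-matrices}): $R^{(I)}$ is idempotent ($R^{(I)} R^{(I)} = R^{(I)}$), it is complementary to the projection onto the intervened coordinates ($R^{(I)} + Q^{(I)\transp} Q^{(I)} = \mId$), the two projections are orthogonal ($Q^{(I)} R^{(I)} = 0$ and $R^{(I)} Q^{(I)\transp} = 0$), and $Q^{(I)} Q^{(I)\transp} = \mId$ on $\R^{|I|}$. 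All of these are immediate from the definitions.

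The crucial first step is a closed form for $M^{-1}$. Although the two summands defining $M$ are each singular (they are supported on complementary coordinate blocks), reordering the coordinates so that $I^c$ precedes $I$ makes $M$ block-diagonal with blocks $\diag(\sigma^2_{I^c})$ and $\diag(\tau_I^2) = \tilde{\Sigma}^{(I)}$; hence $M$ is invertible and I claim $M^{-1} = R^{(I)} \diag(\gamma) R^{(I)} + Q^{(I)\transp} \tilde{K}^{(I)} Q^{(I)}$, which one verifies by multiplying with $M$ and invoking the projection identities above. With $M^{-1}$ in hand, the formula for $K^{(I)}$ reduces to simplifying $A^\transp M^{-1} A$ summand by summand. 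The two key simplifications are $R^{(I)} A = R^{(I)}(\mId - R^{(I)} B) = R^{(I)}(\mId - B)$ (using idempotency of $R^{(I)}$) and $Q^{(I)} A = Q^{(I)}$ (using $Q^{(I)} R^{(I)} = 0$); substituting these into $A^\transp R^{(I)} \diag(\gamma) R^{(I)} A$ and $A^\transp Q^{(I)\transp} \tilde{K}^{(I)} Q^{(I)} A$ respectively yields exactly the claimed expression for $K^{(I)}$.

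The remaining three identities follow quickly from the same relations. For $\nu^{(I)} = K^{(I)} \mu^{(I)} = A^\transp M^{-1} Q^{(I)\transp} \mu_{U_I}$ I use $M^{-1} Q^{(I)\transp} = Q^{(I)\transp} \tilde{K}^{(I)}$ (since $R^{(I)} Q^{(I)\transp} = 0$ and $Q^{(I)} Q^{(I)\transp} = \mId$) together with $A^\transp Q^{(I)\transp} = (Q^{(I)} A)^\transp = Q^{(I)\transp}$. For the quadratic form I observe that $\left(K^{(I)}\right)^{-1} \nu^{(I)} = \mu^{(I)}$, so $\nu^{(I)\transp} \left(K^{(I)}\right)^{-1} \nu^{(I)} = \nu^{(I)\transp} \mu^{(I)}$; then $Q^{(I)} A = Q^{(I)}$ gives $Q^{(I)} A^{-1} = Q^{(I)}$, whence $Q^{(I)} A^{-1} Q^{(I)\transp} = \mId$ and the form collapses to $\mu_{U_I}^\transp \tilde{K}^{(I)} \mu_{U_I}$.

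For the log-determinant I factor $\det K^{(I)} = (\det A)^2 / \det M$, where $\det M = \prod_{k \notin I} \sigma_k^2 \cdot \det \tilde{\Sigma}^{(I)}$ from the block-diagonal form; passing to $\gamma_k = \sigma_k^{-2}$ and $\tilde{K}^{(I)} = (\tilde{\Sigma}^{(I)})^{-1}$ produces the two claimed terms. The one place needing an extra argument — and the main obstacle — is showing $\det A = 1$: the matrix $R^{(I)} B$ is $B$ with its rows indexed by $I$ set to zero, i.e.\ the weight matrix of the intervention DAG $D_I$, which is again acyclic, so $R^{(I)} B$ is nilpotent and every eigenvalue of $A = \mId - R^{(I)} B$ equals $1$. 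I would justify the nilpotency by noting that a topological ordering of $D$ restricts to one of $D_I$, in which $R^{(I)} B$ is strictly triangular. This same nilpotency guarantees that $A$ is invertible throughout, so it underpins the entire computation.
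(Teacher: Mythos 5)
Your proof is correct and takes essentially the same route as the paper's: the same key inversion identity $M^{-1} = R^{(I)}\diag(\gamma)R^{(I)} + Q^{(I)\transp}\tilde{K}^{(I)}Q^{(I)}$, the same projection identities, and the same nilpotency fact about DAG weight matrices. The only differences are cosmetic (and welcome): you get the quadratic-form identity from $Q^{(I)}A^{-1} = Q^{(I)}$ rather than the paper's power-series expansion $A^{-1} = \sum_{k=0}^{p-1}(R^{(I)}B)^k$, and you make explicit the step $\det(\mId - R^{(I)}B) = 1$ that the paper's determinant computation uses only implicitly.
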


\begin{proof}
  To prove the formulae, we use the following identities of the auxiliary matrices (\ref{eqn:help-matrices}):
  \begin{alignat}{3}
    P^{(I)} P^{(I)\transp} & = \mId & \qquad P^{(I)} Q^{(I)\transp} & = 0 & \qquad Q^{(I)\transp} Q^{(I)} & = \mId - R^{(I)} \label{eqn:help-matrix-identities} \\
    Q^{(I)} Q^{(I)\transp} & = \mId & \qquad Q^{(I)} P^{(I)\transp} & = 0 & \qquad R^{(I)} R^{(I)} & = R^{(I)} \nonumber
  \end{alignat}
  To verify the claimed formula for the precision matrix $K^{(I)}$, it can be easily checked using the identities (\ref{eqn:help-matrix-identities}) that
  \begin{equation}
    \left[ R^{(I)} \diag(\sigma^2) R^{(I)} + Q^{(I)\transp} \tilde{\Sigma}^{(I)} Q^{(I)} \right]^{-1} = R^{(I)} \diag(\gamma) R^{(I)} + Q^{(I)\transp} \tilde{K}^{(I)} Q^{(I)}\ .
    \label{eqn:partial-inverse}
  \end{equation}
  We then find
  \begin{align*}
    K^{(I)} & \stackrel{(\ref{eqn:int-dist})}{=} \left(\mId - R^{(I) B}\right)^\transp \left[ R^{(I)} \diag(\gamma) R^{(I)} + Q^{(I)\transp} \tilde{K}^{(I)} Q^{(I)} \right] \left( \mId - R^{(I)} B \right)\\
    & \stackrel{\phantom{(11)}}{=} (\mId - B)^\transp R^{(I)} \diag(\gamma) R^{(I)} (\mId - B) + Q^{(I)\transp} \tilde{K}^{(I)} Q^{(I)}\ ,
  \end{align*}
  where we again use several of the identities (\ref{eqn:help-matrix-identities}) in the last step.
  
  By making use of equations (\ref{eqn:int-dist}) and (\ref{eqn:partial-inverse}) again, we can calculate the transformed expectation:
  \begin{align*}
    \nu^{(I)} & = \left( \mId - R^{(I)} R \right)^\transp \left[ R^{(I)} \diag(\gamma) R^{(I)} + Q^{(I)\transp} \tilde{K}^{(I)} Q^{(I)} \right] Q^{(I)\transp} \mu_{U_I} \\
    & = Q^{(I)\transp} \tilde{K}^{(I)} \mu_{U_I} \ ;
  \end{align*}
  the last step is again a consequence of the identities (\ref{eqn:help-matrix-identities}).
  
  For the next formula, we use the fact that $B$ is a nilpotent matrix; it is not hard to see that every matrix satisfying the DAG constraint actually is nilpotent.  Therefore the inverse of $\left(\mId - R^{(I)} B \right)$ can be calculated as $\left( \mId - R^{(I)} B \right)^{-1} = \sum_{k = 0}^{p - 1} \left(R^{(I)} B \right)^k$.  Together with the identities (\ref{eqn:help-matrix-identities}) and the representation of $\mu^{(I)}$ in (\ref{eqn:int-dist}), we conclude that
  $$
    Q^{(I)} \mu^{(I)} = \sum_{k = 0}^{p - 1} Q^{(I)} \left(R^{(I)} B \right)^k Q^{(I)\transp} \mu_{U_I} = \mu_{U_I} \ .
  $$
  It follows that
  $$
    \nu^{(I)\transp} \left( K^{(I)} \right)^{-1} \nu^{(I)} = \mu^{(I)\transp} \nu^{(I)} = \mu^{(I)\transp} Q^{(I)\transp} \tilde{K}^{(I)} \mu_{U_I} = \mu_{U_I}^\transp \tilde{K}^{(I)} \mu_{U_I} \ ,
  $$
  where we used the formula for $\nu^{(I)}$ already proven before.
  
  To calculate the determinant of $K^{(I)}$ finally, note that there is a permutation matrix $P$ such that
  $$
    P \left[ R^{(I)} \diag(\gamma) R^{(I)} + Q^{(I)\transp} \tilde{K}^{(I)} Q^{(I)} \right] P^\transp
  $$
  is a block matrix.  Hence
  $$
      \det K^{(I)} = \det \tilde{K}^{(I)} \cdot \prod_{k \notin I} \gamma_k
  $$
  or $\log \det K^{(I)} = \sum_{k \notin I} \log \gamma_k + \log \det \tilde{K}^{(I)}$, which completes the proof.
\end{proof}

Up to now, we have only considered a single interventional distribution.  In the next lemma, we provide a formula for the likelihood of an interventional dataset originating from \emph{multiple} intervention targets as defined in (\ref{eqn:structural-eq2}).  In the following, we simplify notation by unifying observational and interventional data point in a common framework.  For this aim, we reuse the convention at the end of Section \ref{sec:mle-given-dag} and consider the \emph{entire} data set $(X^{(i)})_{i=1}^n$, $n = \nobs + \nint$, of all observational and interventional data points.  To make notation short, we denote the complete data set by the matrix $\mathbf{X}$, having the rows $X^{(1)}, \ldots, X^{(n)}$, and the list of intervention targets $T^{(1)}, \ldots, T^{(n)}$ by $\mathcal{T}$.  Recall that an observational data point $X^{(i)}$ is marked by the empty target $T^{(i)} = \emptyset$.

\begin{lemma}
  \label{lem:likelihood}
  Let $(\mathcal{T}, \mathbf{X})$ be an interventional dataset as defined above, produced by a Gaussian causal model with structure $D$.  Moreover, let $B \in \mathbf{B}(D)$ be a weight matrix and $\gamma \in \sR_{> 0}^p$ a vector of inverse error variances.  Denote by $n^{(I)} := |\{i \spst T^{(i)} = I\}|$ and $S^{(I)} := \frac{1}{n^{(I)}} \sum_{i: T^{(i)} = I} X^{(i)} X^{(i)\transp}$ (empirical covariance matrix for intervention $I \in \mathcal{I}$).  Then the log-likelihood of $(\mathcal{T}, \mathbf{X})$ given parameters $B$ and $\gamma$ is
  \begin{multline*}
    \ell_D(B, \gamma; \mathcal{T}, \mathbf{X}) = - \frac{1}{2} \sum_{I \in \mathcal{I}} n^{(I)} \tr \left( S^{(I)} K^{(I)} \right) + \frac{1}{2} \sum_{I \in \mathcal{I}} n^{(I)} \log \det K^{(I)} + C \\
    = - \frac{1}{2} \sum_{I \in \mathcal{I}} n^{(I)} \tr \left[ S^{(I)} (\mId - B)^\transp R^{(I)} \diag(\gamma) R^{(I)} (\mId - B) \right] + \frac{1}{2} \sum_{I \in \mathcal{I}} n^{(I)} \sum_{k \notin I} \log \gamma_k + C' \ ,
  \end{multline*}
  where $C$ and $C'$ are constants given by the dataset $(\mathcal{T}, \mathbf{X})$ that do not depend on the model parameters $B$ and $\gamma$.
\end{lemma}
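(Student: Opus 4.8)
The plan is to begin from the definition of the log-likelihood as the sum $\ell_D = \sum_{i=1}^n \log f_{\mu^{(T^{(i)})}, \Sigma^{(T^{(i)})}}(X^{(i)})$ given at the end of Section \ref{sec:mle-given-dag}, substitute the canonical exponential-family form (\ref{eqn:gaussian-exponential-family}) for each Gaussian density, and then regroup the $n$ summands according to their intervention target. Since all data points sharing a target $I$ carry the same natural parameters $K^{(I)}$ and $\nu^{(I)}$, the sum over the $n^{(I)}$ points with $T^{(i)} = I$ collapses into expressions involving only the empirical second-moment matrix $S^{(I)}$ and the counts $n^{(I)}$.

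Concretely, first I would write each log-density as $\log f_{\mathcal{N}}(X^{(i)}; K^{(I)}, \nu^{(I)}) = -\frac12 \langle X^{(i)} X^{(i)\transp}, K^{(I)} \rangle_{\mathcal{S}^p} + \langle X^{(i)}, \nu^{(I)} \rangle_{\sR^p} - \frac12 \big( \nu^{(I)\transp} (K^{(I)})^{-1} \nu^{(I)} - \log\det K^{(I)} \big) + c$, where $c$ is the $(2\pi)$-normalisation. Summing over $i$ with $T^{(i)} = I$ turns the quadratic term into $-\frac12 n^{(I)} \tr(S^{(I)} K^{(I)})$, using $\langle X^{(i)} X^{(i)\transp}, K^{(I)}\rangle_{\mathcal{S}^p} = \tr(X^{(i)} X^{(i)\transp} K^{(I)})$ together with the definition of $S^{(I)}$, and turns the determinant term into $\frac12 n^{(I)} \log\det K^{(I)}$. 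The two remaining contributions — the linear term $\langle X^{(i)}, \nu^{(I)}\rangle$ and the quadratic-in-$\nu$ term $\nu^{(I)\transp}(K^{(I)})^{-1}\nu^{(I)}$ — equal $\langle Q^{(I)} X^{(i)}, \tilde{K}^{(I)} \mu_{U_I}\rangle$ and $\mu_{U_I}^\transp \tilde{K}^{(I)} \mu_{U_I}$ respectively, by the second and third identities of Lemma \ref{lem:natural-parameters}; both depend only on the data and the nuisance parameters $\tilde{K}^{(I)}, \mu_{U_I}$ and not on $B$ or $\gamma$, so I would absorb them, along with $c$, into the constant $C$. This establishes the first displayed equality.

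For the second equality I would expand $\tr(S^{(I)} K^{(I)})$ and $\log\det K^{(I)}$ using the first and fourth identities of Lemma \ref{lem:natural-parameters}. The precision matrix splits as $K^{(I)} = (\mId - B)^\transp R^{(I)} \diag(\gamma) R^{(I)} (\mId - B) + Q^{(I)\transp} \tilde{K}^{(I)} Q^{(I)}$, so by linearity of the trace the piece $\tr[S^{(I)} Q^{(I)\transp} \tilde{K}^{(I)} Q^{(I)}]$ carries no dependence on $B, \gamma$ and moves into the new constant $C'$; similarly $\log\det K^{(I)} = \sum_{k\notin I}\log\gamma_k + \log\det\tilde{K}^{(I)}$ contributes the $\gamma$-dependent sum while $\log\det\tilde{K}^{(I)}$ joins $C'$. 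Collecting the surviving terms yields the second displayed expression.

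The substantive work — inverting $(\mId - R^{(I)} B)$ and the permutation-to-block-matrix argument that produces the four identities — has already been carried out in Lemma \ref{lem:natural-parameters}, so this lemma is essentially an exercise in substitution and bookkeeping. The one point that requires care, and which I regard as the main (if modest) obstacle, is verifying that every term involving the nuisance parameters $\mu_{U_I}$ and $\tilde{K}^{(I)}$ genuinely decouples from $(B, \gamma)$. This hinges precisely on the fact that $\nu^{(I)}$ and the intervened block of $K^{(I)}$ enter only through $Q^{(I)\transp}\tilde{K}^{(I)}(\cdot)$ in Lemma \ref{lem:natural-parameters}, which is what legitimises treating $\mu_{U_I}, \tau^2_I$ as nuisance parameters that drop out of the profiled likelihood.
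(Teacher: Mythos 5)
Your proposal is correct and follows essentially the same route as the paper's own proof: write the log-likelihood as a sum of exponential-family densities (\ref{eqn:gaussian-exponential-family}), regroup by intervention target to obtain the $n^{(I)} \tr(S^{(I)} K^{(I)})$ and $n^{(I)} \log\det K^{(I)}$ terms, absorb the $\nu$-dependent terms into the constant via Lemma \ref{lem:natural-parameters}, and derive the second display from the identities for $K^{(I)}$ and $\log\det K^{(I)}$. In fact your write-up is somewhat more explicit than the paper's, which dispatches the nuisance-term decoupling and the second equality in single sentences.
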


Note that in the case of purely observational data (that is, if $T^{(i)} = \emptyset$ for all $i$), this result reproduces the classical log-likelihood \citep[see, for example,][]{Banerjee2008Model}
$$
  2 \ell_D(B, \gamma; (\emptyset)_{i = 1}^n, \mathbf{X}) = n (\log \det K - \tr(S K)) + C\ .
$$

\begin{proof}
  The likelihood of the entire data set is the product of the sample likelihoods (\ref{eqn:int-dist}):
  \begin{align*}
    \ell_D(B, \gamma; \mathcal{T}, \mathbf{X}) & = \sum_{i=1}^n \log f\left(X^{(i)} \spst \doop(X_{T^{(i)}}^{(i)} = U^{(i)}_{T^{(i)}}) \right) \\
    & \stackrel{(\ref{eqn:int-dist})}{=} \sum_{i=1}^n \log f_{\mathcal{N}}\left(X^{(i)}; K^{(T^{(i)})}, \nu^{(T^{(i)})}\right) \\
    & \stackrel{(\ref{eqn:gaussian-exponential-family})}{=} - \frac{1}{2} \sum_{i=1}^n \tr\left( X^{(i)} X^{(i)\transp} K^{(T^{(i)})} \right) + \frac{1}{2} \sum_{i=1}^n \log \det K^{(T^{(i)})}  + C \\
    & = - \frac{1}{2} \sum_{I \in \mcI} n^{(I)} \tr\left(S^{(I)} K^{(I)} \right) + \frac{1}{2} \sum_{I \in \mcI} n^{(I)} \log \det K^{(I)} + C \ .
  \end{align*}
  In the calculations above, $C$ stands for a constant that is independent of the model parameters $B$ and $\gamma$ (note that, by Lemma \ref{lem:natural-parameters}, the remaining terms from Equation (\ref{eqn:gaussian-exponential-family}) are independent of model parameters).
  
  The second line of the lemma follows easily from the first one by applying the identities given in Lemma \ref{lem:natural-parameters}.
\end{proof}

The following lemma shows that the log-likelihood derived before is \emph{decomposable} \citep{Chickering2002Optimal} in the sense that it can be written as a sum of terms that only depend on a vertex and its parents.

\begin{lemma}
  \label{lem:likelihood-decomposition}
  Using the definitions $n^{(-k)} := \sum_{I \in \mathcal{I}: k \notin I} n^{(I)}$ and $S^{(-k)} := \sum_{I \in \mathcal{I}: k \notin I} \frac{n^{(I)}}{n^{(-k)}} S^{(I)}$, the log-likelihood of Lemma \ref{lem:likelihood} can be decomposed as follows:
  \begin{align*}
    \ell_D(B, \gamma; \mathcal{T}, \mathbf{X}) & = \sum_{k = 1}^p \ell_k (B_{k \smallbullet}, \gamma_k; \mathcal{T}, \mathbf{X}) + C, \\
    \ell_k (B_{k \smallbullet}, \gamma_k; \mathcal{T}, \mathbf{X}) & = -\frac{1}{2} n^{(-k)} \Big[ \gamma_k (\mId - B)_{k \smallbullet} S^{(-k)} \left( (\mId - B)_{k \smallbullet}\right)^\transp - \log \gamma_k \Big],
  \end{align*}
  where $C$ is a constant that does not depend on the parameters $\gamma$ and $B$.  The calculation of the partial likelihoods $\ell_k$ only involves data measured at vertex $k$ and its parents $\pa(k)$.
\end{lemma}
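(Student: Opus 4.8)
The plan is to start from the second, already-simplified form of the log-likelihood in Lemma~\ref{lem:likelihood} and to reorganize its double sum — over intervention targets $I \in \mcI$ and over vertices — into a single sum over vertices. The structural fact that drives everything is that $R^{(I)} = P^{(I)\transp}P^{(I)}$ is the diagonal projection keeping the coordinates in $I^c$ and killing those in $I$; hence $R^{(I)}\diag(\gamma)R^{(I)} = \diag(\gamma_k\,\mathbbm{1}_{\{k\notin I\}})$ is itself diagonal. It is this diagonality that lets the quadratic form split row by row.

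First I would expand the trace term. Writing $M := \mId - B$ and using that the inner matrix is diagonal, the elementary identity $M^\transp\diag(d)M = \sum_k d_k\,(M_{k\smallbullet})^\transp M_{k\smallbullet}$ together with the cyclic invariance of the trace gives
$$
  \tr\!\left[S^{(I)} M^\transp R^{(I)}\diag(\gamma)R^{(I)} M\right] = \sum_{k\notin I} \gamma_k\, M_{k\smallbullet}\, S^{(I)}\, (M_{k\smallbullet})^\transp,
$$
so each target $I$ contributes, for every vertex $k\notin I$, a single scalar quadratic form in the $k$th row of $M$.

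Next I would interchange the order of summation in both the trace term and the log-determinant term of Lemma~\ref{lem:likelihood}, grouping by vertex $k$ instead of by target $I$. For a fixed $k$ the contributing targets are exactly those $I\in\mcI$ with $k\notin I$, and the definitions $n^{(-k)} = \sum_{I:\,k\notin I} n^{(I)}$ and $S^{(-k)} = \sum_{I:\,k\notin I} \frac{n^{(I)}}{n^{(-k)}} S^{(I)}$ are tailored precisely so that $\sum_{I:\,k\notin I} n^{(I)} S^{(I)} = n^{(-k)} S^{(-k)}$ and $\sum_{I:\,k\notin I} n^{(I)} = n^{(-k)}$. Substituting these collapses both sums and yields
$$
  \ell_D(B,\gamma;\mathcal{T},\mathbf{X}) = \sum_{k=1}^p \left(-\frac{1}{2} n^{(-k)}\Big[\gamma_k (\mId-B)_{k\smallbullet} S^{(-k)} \big((\mId-B)_{k\smallbullet}\big)^\transp - \log\gamma_k\Big]\right) + C,
$$
which is exactly the claimed decomposition, with $C$ absorbing the constant $C'$ of Lemma~\ref{lem:likelihood}.

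Finally, for the locality claim I would note that the row $(\mId-B)_{k\smallbullet}$ is supported on $\{k\}\cup\pa_D(k)$ (a $1$ on the diagonal and the weights $-\beta_{kj}$ for $j\in\pa(k)$), so the quadratic form reads only the submatrix of $S^{(-k)}$ indexed by $\{k\}\cup\pa(k)$; thus $\ell_k$ depends on the data solely through vertex $k$ and its parents. None of these steps is genuinely deep; I expect the only point requiring care to be the bookkeeping of the summation interchange — keeping straight that it is the condition $k\notin I$ (not $k\in I$) that selects which targets feed vertex $k$, and matching this consistently to the definitions of $n^{(-k)}$ and $S^{(-k)}$.
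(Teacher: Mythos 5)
Your proposal is correct and follows essentially the same route as the paper's proof: both split the quadratic form row by row using the diagonal projection structure of $R^{(I)}\diag(\gamma)R^{(I)}$ together with cyclic invariance of the trace, then regroup the sums by vertex so that the definitions of $n^{(-k)}$ and $S^{(-k)}$ collapse them, and finish with the same support observation on $(\mId - B)_{k\smallbullet}$. The only (cosmetic) difference is that the paper momentarily expands $n^{(I)}S^{(I)}$ into a sum over individual data points $X^{(i)}X^{(i)\transp}$ before regrouping, whereas you keep the bookkeeping at the level of targets $I \in \mcI$ throughout.
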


\begin{proof}
  The decomposition of the second summand in Lemma \ref{lem:likelihood} is easy to verify:
  $$
    \sum_{I \in \mathcal{I}} n^{(I)} \sum_{k \notin I} \log \gamma_k = \sum_{i = 1}^n \sum_{k \notin T^{(i)}} \log \gamma_k = \sum_{k = 1}^p \sum_{i: k \notin T^{(i)}} \log \gamma_k = \sum_{k = 1}^p n^{(-k)} \log \gamma_k.
  $$
  
  The decomposition of the first summand makes use of the fact that $\tr(AB) = \tr(BA)$ for any matrices $A$ and $B$ for which $AB$ and $BA$ are defined:
  \begin{align*}
    \sum_{I \in \mathcal{I}} n^{(I)} \tr & \; \Big[ S^{(I)} (\mId - B)^\transp R^{(I)} \diag(\gamma) R^{(I)} (\mId - B) \Big] \\
    & = \sum_{i = 1}^n \tr \Big[ R^{(T^{(i)})} \diag(\gamma) R^{(T^{(i)})} (\mId - B) X^{(i)} X^{(i)\transp} (\mId - B)^\transp \Big] \\
    & = \sum_{i = 1}^n \sum_{k \notin T^{(i)}} \gamma_k (\mId - B)_{k \smallbullet} X^{(i)} X^{(i)\transp} \left((\mId - B)_{k \smallbullet}\right)^\transp \\
    & = \sum_{k = 1}^p n^{(-k)} \gamma_k (\mId - B)_{k \smallbullet}  S^{(-k)} \left((\mId - B)_{k \smallbullet}\right)^\transp.
  \end{align*}

  The $k\subscr{th}$ column of $\mId - B$, $(\mId - B)_{k \smallbullet}$ only has entries at indices $\{k\} \cup \pa(k)$, so the calculation only includes rows and columns of the empirical covariance matrix with those indices and hence only uses data from vertex $k$ and its parents.
\end{proof}

Lemma \ref{lem:likelihood-decomposition} shows that, for a fixed DAG $D$, the maximum likelihood estimates for the weight matrix and the error variances can be calculated ``locally'', that is only involving data of single vertices and their parents.

\begin{lemma}
  \label{lem:maximum-likelihood}
  For a fixed DAG $D$ and given data, the maximum likelihood estimate for its parameters $\sigma$ and $B$ are
  $$
    \hat{B}_{k, \pa(k)} = S^{(-k)}_{k, \pa(k)} \left( S^{(-k)}_{\pa(k), \pa(k)} \right)^{-1}, \quad \hat{\sigma}_k^2 = (\mId - \hat{B})_{k \smallbullet} S^{(-k)} \left((\mId - \hat{B})_{k \smallbullet}\right)^\transp,
  $$
  The maximum partial likelihoods are
  \begin{align*}
    \sup_{B_{k \smallbullet}, \gamma_k} \ell_k(B_{k \smallbullet}, \gamma_k; \mathcal{T}, \mathbf{X}) & = - \frac{1}{2} n^{(-k)} \left(1 + \log \hat{\sigma}_k^2 \right) \\
    & = - \frac{1}{2} n^{(-k)} \Big\{ 1 + \log \Big[ S^{(-k)}_{kk} - S^{(-k)}_{k, \pa(k)} \Big( S^{(-k)}_{\pa(k), \pa(k)} \Big)^{-1} S^{(-k)}_{\pa(k), k} \Big] \Big\}
  \end{align*}
\end{lemma}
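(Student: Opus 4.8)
The plan is to exploit the decomposition of Lemma~\ref{lem:likelihood-decomposition}: since $\ell_D = \sum_{k=1}^p \ell_k + C$, and the summand $\ell_k$ depends only on the $k$th row $B_{k\smallbullet}$ and the scalar $\gamma_k$, with these parameters varying freely and independently across $k$, the global maximum is attained by maximizing each $\ell_k$ separately. I would therefore fix $k$, abbreviate $S := S^{(-k)}$, and work with the row vector $w := (\mId - B)_{k\smallbullet}$, whose $k$th entry is pinned at $1$ and whose only free entries are $-\beta_{kj}$ for $j \in \pa(k)$, all remaining entries vanishing by the DAG constraint $B \in \mathbf{B}(D)$.

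First I would maximize over $B_{k\smallbullet}$ for fixed $\gamma_k$. Writing $\ell_k = -\tfrac{1}{2} n^{(-k)}\big(\gamma_k\, w S w^\transp - \log\gamma_k\big)$ and noting that $\gamma_k > 0$ enters as a positive multiplier, maximizing $\ell_k$ over $B_{k\smallbullet}$ is equivalent to minimizing the quadratic form $Q(b) := w S w^\transp$, and its minimizer is independent of $\gamma_k$. Collecting the free coefficients into the column $b := (\beta_{kj})_{j \in \pa(k)}$ and expanding gives $Q(b) = S_{kk} - 2 S_{k,\pa(k)}\, b + b^\transp S_{\pa(k),\pa(k)}\, b$, a strictly convex quadratic as soon as $S_{\pa(k),\pa(k)}$ is positive definite. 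Setting the gradient to zero yields the normal equations $S_{\pa(k),\pa(k)}\, b = S_{\pa(k),k}$, whose solution is the first claimed identity $\hat{B}_{k,\pa(k)} = S_{k,\pa(k)}\,(S_{\pa(k),\pa(k)})^{-1}$. Substituting $\hat{b}$ back and simplifying the resulting Schur complement gives $Q(\hat{b}) = S_{kk} - S_{k,\pa(k)}\,(S_{\pa(k),\pa(k)})^{-1} S_{\pa(k),k}$, which is exactly the second expression for $\hat{\sigma}_k^2 = (\mId - \hat{B})_{k\smallbullet} S \big((\mId - \hat{B})_{k\smallbullet}\big)^\transp$.

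It then remains to maximize $\ell_k$ over $\gamma_k > 0$ with $Q$ replaced by its minimal value $\hat{\sigma}_k^2$, i.e.\ to minimize the univariate convex function $g(\gamma_k) = \gamma_k\,\hat{\sigma}_k^2 - \log\gamma_k$. Its derivative vanishes at $\hat{\gamma}_k = 1/\hat{\sigma}_k^2$ (consistent with the inverse-variance parameterization), with minimal value $g(\hat{\gamma}_k) = 1 + \log\hat{\sigma}_k^2$. Inserting this into $\ell_k$ yields $\sup_{B_{k\smallbullet},\gamma_k} \ell_k = -\tfrac{1}{2} n^{(-k)}\big(1 + \log\hat{\sigma}_k^2\big)$, and the explicit Schur-complement form of $\hat{\sigma}_k^2$ produces the final displayed line.

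I expect no serious obstacle: the argument is a textbook combination of a multivariate least-squares step with a one-dimensional optimization. The only point deserving a word of care is the positive definiteness of the submatrix $S_{\pa(k),\pa(k)}$, which guarantees both a unique solution of the normal equations and strict convexity of $Q$, so that the stationary point is a genuine minimizer; this holds almost surely once $n^{(-k)}$ exceeds $|\pa(k)|$, which is the relevant regime for the consistency statement.
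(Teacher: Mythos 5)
Your proof is correct and follows essentially the same route as the paper's: both rely on the decomposition of Lemma~\ref{lem:likelihood-decomposition}, arrive at the same normal equations $B_{k,\pa(k)} S^{(-k)}_{\pa(k),\pa(k)} = S^{(-k)}_{k,\pa(k)}$ for the weight matrix, and then solve the one-dimensional problem in $\gamma_k$, including the same almost-sure invertibility caveat when $n^{(-k)} > |\pa(k)|$. The only difference is organizational—you profile out $B_{k\smallbullet}$ for fixed $\gamma_k$ and verify strict convexity explicitly, whereas the paper works directly with the stationary-point equations and leaves the second-order check implicit—which if anything makes your version slightly more complete.
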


\begin{proof}
  The maximum likelihood estimate must be a root of the derivative of the likelihood.  From Lemma \ref{lem:likelihood-decomposition}, we see that
  $\partder{}{B_{ki}}\ell = \partder{}{B_{ki}}\ell_k$ for $i = 1, \ldots, p$.  This partial derivative is
  \begin{equation}
    \partder{}{B_{ki}} \ell_k(B_{k \smallbullet}, \gamma_k; \mathcal{T}, \mathbf{X}) \propto (\mId - B)_{k \smallbullet} S^{(-k)}_{\smallbullet i} = S^{(-k)}_{ki} - B_{k \smallbullet} S^{(-k)}_{\smallbullet i} \ .
    \label{eqn:partder-weight-matrix}
  \end{equation}
  For a fixed $k$, $B_{k \smallbullet}$ has one non-zero entry for every parent of $k$ in the DAG $D$.  For those entries, we get the system of linear equations
  $$
    B_{k, \pa(k)} S^{(-k)}_{\pa(k), i} = S^{(-k)}_{ki}, \quad \spforall i \in \pa(k),
  $$
  by setting the partial derivatives \eqref{eqn:partder-weight-matrix} to zero.  In matrix notation, this reads
  $$
    B_{k, \pa(k)} S^{(-k)}_{\pa(k), \pa(k)} = S^{(-k)}_{k, \pa(k)}
  $$
  and has the solution
  $$
    \hat{B}_{k, \pa(k)} = S^{(-k)}_{k, \pa(k)} \left( S^{(-k)}_{\pa(k), \pa(k)} \right)^{-1}\ ;
  $$
  note that $S^{(-k)}_{k, \pa(k)}$ is invertible almost surely if $n^{(-k)} > |\pa(k)|$.
  
  The derivative with respect to the error variances is
  $$
    \partder{}{\gamma_k} \ell_k(B_{k \smallbullet}, \gamma_k) \propto (\mId - B)_{k \smallbullet} S^{(-k)} \left((\mId - B)_{k \smallbullet}\right)^\transp - \frac{1}{\gamma_k}
  $$
  and has the inverse root
  \begin{align*}
    \frac{1}{\hat{\gamma}_k} & = \hat{\sigma}_k^2 = (\mId - \hat{B})_{k \smallbullet} S^{(-k)} \left((\mId - \hat{B})_{k \smallbullet}\right)^\transp \\
    & = S^{(-k)}_{kk} - S^{(-k)}_{k, \pa(k)} \Big( S^{(-k)}_{\pa(k), \pa(k)} \Big)^{-1} S^{(-k)}_{\pa(k), k}.
  \end{align*}
  By plugging this into the formula of Lemma \ref{lem:likelihood-decomposition}, we immediately find the formula for the supremum of the partial likelihoods.
\end{proof}

\subsection{Definition of interventional Markov equivalence class}
\label{sec:def-int-equivalence-class}

The observational Markov equivalence class of a DAG can be described as follows. For a DAG $D$, denote by ${\cal M}(D) = \{f;\ f\ \mbox{Markov with respect to}\ D\}$ all distributions which are Markov with respect to $D$. Thereby, the Markovian property is meant to be the factorization property as in (\ref{eqn:factorization}), and we denote by $f$ the density of the $p$-dimensional Gaussian distribution. Two DAGs $D \sim D'$ are Markov equivalent, if and only if ${\cal M}(D) = {\cal M}(D')$. The observational equivalence class of a DAG $D$ is then denoted by ${\cal E}(D)$ which can be represented as an essential graph which is a chain graph with directed and undirected edges \citep{Andersson1997Characterization}. 

For the interventional Markov equivalence class, we proceed as follows. For a DAG $D$, consider the corresponding intervention DAG $D_I$ where we remove all edges which point from $\pa(I)$ to $I$. Furthermore, consider a family of intervention targets ${\cal I}$ and corresponding tuples of densities $(f_I)_{I \in {\cal I}}$, where each element corresponds to an intervention target $I \in {\cal I}$. Let
\begin{eqnarray*}
  {\cal M}_{\cal I}(D) = \{(f_I)_{I \in {\cal I}};& &\spforall I \in {\cal I}:\
  f_I \in {\cal M}(D_I),\ \mbox{and}\\
  & &\spforall I,J \in {\cal I}, \spforall i
  \notin I \cup J:\ f_I(x_i|x_{\pa_D(i)}) = f_J(x_i|x_{\pa_D(i)})\}.
\end{eqnarray*}
Two DAGs $D$ and $D'$ are interventionally Markov equivalent with respect to the family of targets $\mcI$ (notation: $D \sim_\mcI D'$) if and only if ${\cal M}_{\cal I}(D) = {\cal M}_{\cal I}(D')$ \citep{Hauser2012Characterization}. For a DAG $D$, the interventional Markov equivalence class with respect to \mcI{} (or \mcI-Markov equivalence class) is denoted by $[D]_\mcI$ which, as in the observational case, can be characterized by an essential graph $\mathcal{E_I}(D)$ \citep{Hauser2012Characterization}. For ${\cal I} = \emptyset$, the definition coincides with the observational Markov equivalence class above. Although the definition of interventional Markov equivalence is somewhat cumbersome, the defined object indeed represents the DAGs which are equivalent and non-distinguishable from the interventional distributions (and if ${\cal   I}$ also contains the $\emptyset$-target, from observational and interventional distributions). In other words, assuming faithfulness as in (\ref{eqn:pobs}), the ${\cal I}$ interventional Markov equivalence is identifiable from the distributions. 

\subsection{Proof of Theorem \ref{thm:consistency}}
\label{sec:proof-consistency}

In the previous section, we calculated the maximum of the likelihood of causals models given a set of interventional data $(\mathcal{T}, \mathbf{X})$.  For model selection, that is, estimating the causal model that produced a given dataset, the model complexity has to be penalized to avoid overfitting.  For large interventional (and potentially observational) samples, it stands to reason to choose the complexity penalty of the Bayesian information criterion (BIC).

The maximization of the BIC of a growing sequence of i.i.d.\ data is known to lead to consistent model selection from a set of curved exponential models \citep{Haughton1988Choice}.

\begin{definition}[Curved exponential model; \citealp{Haughton1988Choice}]
  \label{def:curved-exponential-model}
  Let $\mathcal{P} = \{f(x; \theta) = h(x) \exp[ \langle T(x), \theta \rangle - b(\theta) ] \spst \theta \in \Theta\}$ be an exponential family with natural parameter space $\Theta \subset \sR^k$.  A curved exponential model is a set of parameters of the form $M \cap \Theta$, where $M$ is a smooth connected manifold embedded in $\sR^k$.
\end{definition}

Suppose $(X^{(i)})_{i=1}^n$ is a sequence of i.i.d.\ realizations from a density in the exponential family of Definition \ref{def:curved-exponential-model}, and let $M \cap \Theta$ be an curved exponential model in that family.  The \emph{Bayesian information criterion} or \emph{BIC} of $M \cap \Theta$ is then defined as
\begin{align}
  S(M; \mathbf{X}) & := \sup_{\theta \in M \cap \Theta} \log \prod_{i = 1}^n f(X^{(i)}; \theta) - \frac{1}{2} \dim(M) \log n \nonumber \\
  & = n \sup_{\theta \in M \cap \Theta} (\langle \overline{T}_n, \theta \rangle - b(\theta)) - \frac{1}{2} \dim(M) \log n \ , \label{eqn:bic}
\end{align}
where $\overline{T}_n$ stands for the mean statistic $\overline{T}_n := \frac{1}{n} \sum_{i=1}^n T(X^{(i)})$ and $\mathbf{X}$ is the data matrix having the samples $X^{(i)}$ as rows.

\begin{theorem}[Consistency of the BIC; \citealp{Haughton1988Choice}]
  \label{thm:bic-consistency}
  Let $M_1 \cap \Theta, M_2 \cap \Theta, \ldots$ be a finite set of curved exponential models in the natural parameter space $\Theta$ of an exponential family as in Definition \ref{def:curved-exponential-model} with the following property: for each $i \neq j$, if a point in $\overline{M_i}$ is in $M_j \cap \interior{\Theta}$, then it is in $M_i$.
  
  Assume $\theta \in \interior{\Theta}$ and let $M_i$ and $M_j$ be two different curved exponential models.  If $\theta \in M_i \setminus M_j$, or if $\theta \in M_i \cap M_j$ with $\dim(M_i) < \dim(M_j)$, then
  $$
    \lim_{n \to \infty} P_\theta[S(M_i; \mathbf{X}) > S(M_j; \mathbf{X})] = 1\ .
  $$
\end{theorem}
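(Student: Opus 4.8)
The plan is to analyze the sign of the score difference
\[
  S(M_i; \mathbf{X}) - S(M_j; \mathbf{X}) = n \big( \hat{\Lambda}_i - \hat{\Lambda}_j \big) - \tfrac{1}{2}\big(\dim(M_i) - \dim(M_j)\big)\log n,
\]
where $\hat{\Lambda}_m := \sup_{\vartheta \in M_m \cap \Theta}(\langle \overline{T}_n, \vartheta\rangle - b(\vartheta))$ is the maximized normalized log-likelihood from (\ref{eqn:bic}). Writing $\theta$ for the true parameter and recalling $\overline{T}_n \to \nabla b(\theta)$ almost surely by the law of large numbers, I would introduce the population objective $\Lambda(\vartheta) := \langle \nabla b(\theta), \vartheta\rangle - b(\vartheta)$, which is strictly concave in $\vartheta$ with unique maximizer $\vartheta = \theta$ and maximal value $\Lambda^\ast := \Lambda(\theta)$. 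Since each map $t \mapsto \sup_{\vartheta \in M_m \cap \Theta}(\langle t, \vartheta\rangle - b(\vartheta))$ is convex and hence continuous on the interior of its domain (which contains $\nabla b(\theta)$ because $\theta \in \interior{\Theta}$), it follows by the continuous mapping theorem that $\hat{\Lambda}_m \to \Lambda_m := \sup_{\vartheta \in M_m \cap \Theta}\Lambda(\vartheta)$ in probability. The two hypotheses are then handled separately.

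In the first case, $\theta \in M_i \setminus M_j$. I would first use the stated boundary condition to show $\theta \notin \overline{M_j}$: otherwise $\theta \in \overline{M_j}$ while $\theta \in M_i \cap \interior{\Theta}$, which by the condition with the roles of $i$ and $j$ exchanged would force $\theta \in M_j$, a contradiction. Consequently the unique global maximizer $\theta$ of $\Lambda$ is separated from the closed set $\overline{M_j}$, so $\Lambda_j < \Lambda^\ast = \Lambda_i$ and the population gap $\delta := \Lambda_i - \Lambda_j > 0$ is strictly positive. Plugging this into the display, the leading term $n(\hat{\Lambda}_i - \hat{\Lambda}_j)$ diverges to $+\infty$ in probability, being $n$ times a quantity converging to $\delta > 0$, while the penalty difference is only $O(\log n)$; hence $S(M_i; \mathbf{X}) - S(M_j; \mathbf{X}) \to +\infty$ in probability and the claim follows.

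In the second case, $\theta \in M_i \cap M_j$ with $\dim(M_i) < \dim(M_j)$, so both models contain the truth and $\hat{\Lambda}_i, \hat{\Lambda}_j \to \Lambda^\ast$; the first-order analysis no longer separates the models and one must descend to the $\sqrt{n}$ scale. Here I would invoke the Wilks-type asymptotics for a smooth (curved) exponential submodel passing through an interior point: writing $L_n$ for the unnormalized log-likelihood, one has $2\big(\sup_{\vartheta \in M_m \cap \Theta} L_n(\vartheta) - L_n(\theta)\big) \xrightarrow{d} \chi^2_{\dim(M_m)}$ for $m \in \{i, j\}$, so that (after the common constant and the term $L_n(\theta)$ cancel in the difference) $n(\hat{\Lambda}_i - \hat{\Lambda}_j) = \big(\sup_{M_i} L_n - L_n(\theta)\big) - \big(\sup_{M_j} L_n - L_n(\theta)\big) = O_P(1)$ is bounded in probability. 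Since $\dim(M_j) - \dim(M_i) > 0$, the penalty term $\tfrac{1}{2}(\dim(M_j) - \dim(M_i))\log n \to +\infty$ dominates the bounded likelihood difference, again giving $S(M_i; \mathbf{X}) - S(M_j; \mathbf{X}) \to +\infty$ in probability.

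The main obstacle is the Wilks-type statement used in the second case, namely that the maximized log-likelihood over a curved submodel exceeds its value at the true interior parameter by only an $O_P(1)$ amount. This rests on a local quadratic (local asymptotic normality) expansion of $L_n$ around $\theta$, exploiting the smoothness of the embedded manifold $M_m$ and the positive definiteness of the Fisher information $\nabla^2 b(\theta)$ guaranteed by $\theta \in \interior{\Theta}$. This is exactly the content supplied by \citet{Haughton1988Choice}, whose regularity assumptions, together with the boundary condition invoked in the first case, make both limits rigorous.
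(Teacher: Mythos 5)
This theorem is not proved in the paper at all: it is imported verbatim from \citet{Haughton1988Choice}, and the paper's actual work consists of verifying its hypotheses for the interventional Gaussian models (Propositions \ref{prop:mixing-exponential-family}, \ref{prop:smooth-embedding} and \ref{prop:models-separated}). So the comparison here is with Haughton's original proof rather than with anything in the paper, and your sketch reproduces its standard structure correctly: the decomposition of the score difference into an $O(n)$ likelihood term and an $O(\log n)$ penalty term, the two-case split, and --- importantly --- the correct deployment of the closure condition. Your observation that the condition, applied with the roles of $i$ and $j$ exchanged, forces $\theta \notin \overline{M_j}$ in the case $\theta \in M_i \setminus M_j$ is exactly the role this hypothesis plays; without it the population gap $\Lambda^\ast - \Lambda_j$ could vanish and the first-order argument would collapse. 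The convexity/continuity argument for $\hat{\Lambda}_m \to \Lambda_m$ and the strict-concavity argument for $\Lambda_j < \Lambda^\ast$ are both sound (the latter implicitly uses minimality of the family, i.e.\ positive definite Fisher information, which is fine here).

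The one place where your argument is not self-contained is the nested case: the claim that $\sup_{\vartheta \in M_m \cap \Theta} L_n(\vartheta) - L_n(\theta) = O_P(1)$ is genuinely the hard part of Haughton's paper, because the submanifold $M_m$ is neither compact nor affine, so one must rule out that the supremum is approached far from $\theta$ (or near the boundary of $\Theta$) before any local quadratic expansion applies; the Wilks-type $\chi^2_{\dim(M_m)}$ limit you quote is a consequence of that control, not a substitute for it. Since the statement being proved \emph{is} Haughton's theorem, deferring precisely this step to Haughton makes your proposal a faithful outline rather than an independent proof --- which is a reasonable outcome, and mirrors what the paper itself does by citing the result rather than proving it.
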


As we explained in Section \ref{sec:structure-learning-bic}, we regard the intervention targets $T^{(1)}, \ldots, T^{(n)}$ as a \emph{random} sequence, taking a ``value'' $I \in \mcI$ with probability $w_I$ (assumption (A3) of Section \ref{sec:structure-learning-bic}).  With this assumption, we can treat the complete data set $(T^{(i)}, X^{(i)})_{i = 1}^n$ as i.i.d.\ realizations of random variables $(T, X) \in \mcI \times \sR^p$.  Expressed in this notation, we have shown in Section \ref{sec:proofs-mle} that the \emph{conditional} densities $f(x \spst T = I) = f_\intv(x \spst \doop(X_I = U_I))$ belong to an exponential family.  In the next proposition, we show that also the joint density of $(T, X)$ belongs to an exponential family.

\begin{proposition}
  \label{prop:mixing-exponential-family}
  Consider a set of random variables $(X, Y) \in \sR^p \times \{1, \ldots, J\}$ with $P[Y = j] = w_j$, $1 \leq j \leq J$, and $(X \spst Y = j) \sim f(\cdot \ ; \theta_j)$, where $f(x; \theta)$ is a density from an exponential family:
  $$
    f(x; \theta) = h(x) \exp \left[ \langle T(x), \theta \rangle - b(\theta) \right] \ .
  $$
  Then the joint density of $X$ and $Y$ is also an element of an exponential family, namely
  $$
    f(x, y; \boldsymbol{\theta}, \eta) = h(x) \exp \left[ \left\langle S(x, y), \binom{\boldsymbol{\theta}}{\eta} \right\rangle - a(\boldsymbol{\theta}, \eta) \right]\ .
  $$
  The natural parameters are given by $\boldsymbol{\theta} = (\theta_1^\transp, \ldots, \theta_J^\transp)^\transp$ and $\eta = (\eta_1, \ldots, \eta_{J - 1})^\transp$ with $\eta_j = \log \frac{w_j}{w_J} - b(\theta_j) + b(\theta_J)$.  The sufficient statistic $S$ and the log-partion function $a$ are given by
  \begin{align*}
    S(x, y) & = (\delta_{y,1} T(x)^\transp, \ldots, \delta_{y,J} T(x)^\transp, \delta_{y,1}, \ldots, \delta_{y,J-1})^\transp\ , \\
    a(\boldsymbol{\theta}, \eta) & = b(\theta_j) + \log \left[1 + \sum_{j=1}^{J-1} \exp \left( \eta_j + b(\theta_j) - b(\theta_J) \right) \right]\ .
  \end{align*}
\end{proposition}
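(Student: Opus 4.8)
The plan is to compute the joint density of $(X,Y)$ directly from the factorization $f(x,y) = \Prob[Y=y]\,f(x \spst Y=y) = w_y\,h(x)\exp[\langle T(x),\theta_y\rangle - b(\theta_y)]$ and then to massage it into the canonical exponential-family form by encoding the categorical label $y$ through the indicators $\delta_{y,j}$. Taking logarithms, I would start from
\begin{equation*}
  \log f(x,y) = \log h(x) + \langle T(x), \theta_y\rangle - b(\theta_y) + \log w_y,
\end{equation*}
so that the entire task reduces to expressing the three $y$-dependent pieces $\langle T(x),\theta_y\rangle$, $-b(\theta_y)$ and $\log w_y$ as an affine function of the indicators $\delta_{y,1},\ldots,\delta_{y,J}$, thereby reading off the sufficient statistic, the natural parameters, and the normalizer.

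First I would treat the statistic-dependent term. Since $\theta_y = \sum_{j=1}^J \delta_{y,j}\theta_j$, one has $\langle T(x),\theta_y\rangle = \sum_{j=1}^J \langle \delta_{y,j}T(x), \theta_j\rangle = \langle S_1(x,y), \boldsymbol{\theta}\rangle$ with $S_1(x,y) = (\delta_{y,1}T(x)^\transp, \ldots, \delta_{y,J}T(x)^\transp)^\transp$ and $\boldsymbol{\theta} = (\theta_1^\transp,\ldots,\theta_J^\transp)^\transp$ as claimed; this accounts for the first block of $S(x,y)$ together with $\boldsymbol{\theta}$. The only genuinely structural point is the remaining term $g(y) := \log w_y - b(\theta_y)$, which depends on $y$ alone but whose $J$ weights are tied by $\sum_j w_j = 1$, leaving only $J-1$ free degrees of freedom. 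I would therefore single out category $J$ as a reference and look for constants $\eta_1,\ldots,\eta_{J-1}$ and a normalizer $a$ with $g(y) = \sum_{j=1}^{J-1}\delta_{y,j}\eta_j - a$. Evaluating at $y=J$, where all these indicators vanish, forces $a = b(\theta_J) - \log w_J$; evaluating at $y=j<J$ then yields $\eta_j = \log(w_j/w_J) - b(\theta_j) + b(\theta_J)$, exactly the asserted formula. This identifies the second block $S_2(y) = (\delta_{y,1},\ldots,\delta_{y,J-1})^\transp$ of the sufficient statistic and the parameter $\eta$.

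It remains to express the log-partition function intrinsically in terms of $(\boldsymbol{\theta},\eta)$ rather than the weights. The cleanest route is normalization: summing over $y$ and integrating over $x$, and using the identity $\int h(x)\exp[\langle T(x),\theta\rangle]\,dx = \exp[b(\theta)]$ (which follows because the conditional densities $f(\cdot\,;\theta)$ normalize), I would obtain $\exp[a] = \exp[b(\theta_J)] + \sum_{j=1}^{J-1}\exp[b(\theta_j)+\eta_j]$, whence $a(\boldsymbol{\theta},\eta) = b(\theta_J) + \log[1 + \sum_{j=1}^{J-1}\exp(\eta_j + b(\theta_j) - b(\theta_J))]$, the log-partition function claimed in the proposition. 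As a consistency check one notes that $\eta_j + b(\theta_j) - b(\theta_J) = \log(w_j/w_J)$ collapses the bracket to $1/w_J$, recovering $a = b(\theta_J) - \log w_J$.

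I do not expect any step to be a serious obstacle, since this is essentially a bookkeeping verification. The one point demanding care is the asymmetric treatment of the reference category $J$ that is forced by the simplex constraint on the mixing weights: it is precisely this constraint that reduces the categorical part of the natural parameter to $J-1$ coordinates and produces the softmax-type form of $a$. Everything else is matching coefficients of the indicators and one normalization integral.
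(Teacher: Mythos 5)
Your proof is correct and follows essentially the same route as the paper's: factor the joint density as $w_y\,f(x;\theta_y)$, linearize the $y$-dependence via the indicators $\delta_{y,j}$ with category $J$ as reference, and identify the same natural parameters $\eta_j = \log(w_j/w_J) - b(\theta_j) + b(\theta_J)$ and sufficient statistic. The only cosmetic difference is the final step: you recover the log-partition function $a(\boldsymbol{\theta},\eta)$ by normalizing over $(x,y)$, whereas the paper solves the simplex constraint $\sum_j w_j = 1$ for $w_J$ and substitutes --- the two computations are the same use of that constraint and yield the identical softmax form (incidentally confirming that the $b(\theta_j)$ appearing outside the logarithm in the paper's statement of $a$ is a typo for $b(\theta_J)$, which you wrote correctly).
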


\begin{proof}
  A straight-forward calculation yields to the claimed result:
  \begin{align*}
    f(x, y; \boldsymbol{\theta}, \eta) & = w_y f(x; \theta_y) \\
    & = h(x) \exp[ \langle T(x), \theta_y \rangle - b(\theta_y) + \log w_y ] \\
    & = h(x) \exp\left[ \sum_{j=1}^J \langle \delta_{y,j} T(x), \theta_j \rangle - \sum_{j=1}^J \delta_{y,j} (\log w_j - b(\theta_j)) \right] \\
    & = h(x) \exp\left[ \sum_{j=1}^J \langle \delta_{y,j} T(x), \theta_j \rangle - \sum_{j=1}^{J-1} \delta_{y,j} \left(\log \frac{w_j}{w_J} - b(\theta_j) + b(\theta_J) \right) \right. \\
    & \phantom{= h(x) \exp[} \left. + \left(1 - \sum_{j=1}^{J-1} \delta_{j,y}\right) (\log w_J - b(\theta_J)) + \sum_{j=1}^{J-1} \delta_{j,y} (\log w_J - b(\theta_J)) \right] \\
    & = h(x) \exp\left[ \sum_{j=1}^J \langle \delta_{y,j} T(x), \theta_j \rangle - \sum_{j=1}^{J-1} \delta_{y,j} \left(\log \frac{w_j}{w_J} - b(\theta_j) + b(\theta_J) \right) \right. \\
    & \phantom{= h(x) \exp[} \left. + \log w_J - b(\theta_J) \right] \\
    & = h(x) \exp \left[ \left\langle S(x, y), \binom{\boldsymbol{\theta}}{\eta} \right\rangle + \log w_J - b(\theta_J) \right]
  \end{align*}
  with the definitions of $S(x, y)$, $\boldsymbol{\theta}$ and $\eta$ from above.
  
  To finish the calculation, we need to express $w_J$ as a function of $\boldsymbol{\theta}$ and $\eta$: since
  $$
    w_J = 1 - \sum_{j=1}^{J-1} w_j = 1 - \sum_{j=1}^{J-1} \exp[ \eta_j + b(\theta_j) - b(\theta_J)] w_J\ ,
  $$
  we find
  $$
    w_J = \left[ 1 + \sum_{j=1}^{J-1} \exp( \eta_j + b(\theta_j) - b(\theta_J)) \right]^{-1}\ ,
  $$
  what immediately yields the claimed log-partition function $a(\boldsymbol{\theta}, \eta)$.
\end{proof}

In order to prove the consistency of the BIC for causal model selection under interventions in the limit of large interventional samples, we must show that the models described by different DAGs fit the prerequisites of Theorem \ref{thm:bic-consistency}.

We have already seen that a single Gaussian interventional density (\ref{eqn:int-dist}) is a representative of an exponential family with natural parameters $K^{(I)}$ and $\nu^{(I)}$ living in $\mathcal{S}^p$ and $\sR^p$, respectively (see (\ref{eqn:gaussian-exponential-family})).  By Proposition \ref{prop:mixing-exponential-family}, we conclude that the natural parameter space for the complete family of interventions is
$$
  \underbrace{( \mathcal{S}^p_{>0} )^J}_{=: \mathcal{S}} \times \underbrace{(\sR^p)^J}_{=: \mathcal{V}} \times \underbrace{\sR^{J - 1}}_{=: \mathcal{W}} \ ,
$$
where we write $J := |\mcI|$.  We have already seen that the interventional densities are determined by \emph{model parameters} and \emph{experimental parameters}; the model parameters are $B \in \mathbf{B}(D)$ and $\gamma \in \sR^p_{>0}$.  Therefore the sets of natural parameters corresponding to different models are parameterized by functions
$$
  \Phi^\mcI_D: \mathbf{B}(D) \times \sR^p_{>0} \to \mathcal{S} \times \mathcal{V} \times \mathcal{W} \ .
$$
Before showing that the images of those maps form indeed a set of embedded manifolds in $\mathcal{S} \times \mathcal{V} \times \mathcal{W}$ satisfying the prerequisites of Theorem \ref{thm:bic-consistency}, we sum up our notation from above and from Lemma \ref{lem:natural-parameters}.

\begin{definition}
  \label{def:exponential-parameterization}
  Let $D$ be a DAG.  Furthermore, let $\mcI$ be a conservative family of intervention targets, and $T \in \mcI$ arbitrary.  Then we define
  \begin{align*}
    \Phi^\mcI_D: \ \mathbf{B}(D) \times \sR^p_{>0} & \to \mathcal{S} \times \mathcal{V} \times \mathcal{W}, \\
    (B, \gamma) & \mapsto \left( \big( K^{(I)}(B, \gamma) \big)_{I \in \mcI}, \big( \nu^{(I)} \big)_{I \in \mcI}, \big( \eta^{(I)} \big)_{I \in \mcI \setminus \{T\}} \right)
  \end{align*}
  with
  \begin{align*}
    K^{(I)}(B, \gamma) & = (\mId - B)^\transp R^{(I)} \diag(\gamma) R^{(I)} (\mId - B)^\transp + Q^{(I)\transp} \tilde{K}^{(I)} Q^{(I)}\ , \\
    \nu^{(I)} & = Q^{(I)\transp} \tilde{K}^{(I)} \tilde{\mu}^{(I)}\ , \\
    \eta^{(I)} & = \log \frac{\tilde{w}_I}{\tilde{w}_T} - b\big[K^{(I)}(B, \gamma), \nu^{(I)}\big] + b\big[K^{(T)}(B, \gamma), \nu^{(T)}\big] \ , \\
    b(K, \nu) & = \frac{1}{2}(\nu^\transp K^{-1} \nu - \log \det K) \ .
  \end{align*}
  Furthermore, we denote the image of $\Phi^\mcI_D$ in $\mathcal{S} \times \mathcal{V} \times \mathcal{W}$ by $M^\mcI_D$.
\end{definition}

\begin{proposition}
  \label{prop:smooth-embedding}
  With the notation from Definition \ref{def:exponential-parameterization}, the image $M^\mcI_D$ is an embedded, smooth manifold in $\mathcal{S} \times \mathcal{V} \times \mathcal{W}$.
\end{proposition}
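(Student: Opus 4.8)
The plan is to show that $\Phi^\mcI_D$ is a smooth \emph{embedding} of the open set $\mathbf{B}(D) \times \sR^p_{>0} \subseteq \sR^{\dim D}$ into $\mathcal{S} \times \mathcal{V} \times \mathcal{W}$; its image $M^\mcI_D$ is then an embedded submanifold diffeomorphic to the domain, of dimension $\dim D$. The cleanest route is to exhibit an explicit smooth left inverse, which simultaneously delivers injectivity, the immersion property, and a continuous inverse on the image.

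First I would check that $\Phi^\mcI_D$ is smooth. Each $K^{(I)}(B,\gamma)$ is polynomial in the entries of $B$ and $\gamma$; the vectors $\nu^{(I)}$ do not depend on $(B,\gamma)$ at all (they are fixed by the experimental parameters, cf.\ Lemma \ref{lem:natural-parameters}); and $b(K,\nu) = \tfrac12(\nu^\transp K^{-1}\nu - \log\det K)$ is smooth on the positive-definite cone, so each $\eta^{(I)}$ is smooth. Hence $\Phi^\mcI_D$ is smooth into $\mathcal{S}\times\mathcal{V}\times\mathcal{W}$.

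Next, and this is the heart of the argument, I would build a smooth left inverse $\Psi$ using conservativeness. For each vertex $k$, conservativeness provides a target $I_k \in \mcI$ with $k \notin I_k$; fix one such choice once and for all. Given a point with coordinates $(K^{(I)})_{I}$, I invert $K^{(I_k)}$ to obtain $\Sigma^{(I_k)}$ and read off
$$
  B_{k,\pa(k)} = \Sigma^{(I_k)}_{k,\pa(k)} \big(\Sigma^{(I_k)}_{\pa(k),\pa(k)}\big)^{-1}, \qquad \gamma_k^{-1} = \Sigma^{(I_k)}_{kk} - \Sigma^{(I_k)}_{k,\pa(k)} \big(\Sigma^{(I_k)}_{\pa(k),\pa(k)}\big)^{-1} \Sigma^{(I_k)}_{\pa(k),k}.
$$
These are the population analogues of the estimators in Lemma \ref{lem:maximum-likelihood}. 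The recovered $B$ automatically lies in $\mathbf{B}(D)$ (only parent entries are filled) and $\gamma_k > 0$. That $\Psi \circ \Phi^\mcI_D = \mathrm{id}$ is exactly the invariance built into the definition of $\mathcal{M}_\mcI(D)$: for $k \notin I_k$ the conditional law of $X_k$ given $X_{\pa(k)}$ under intervention $I_k$ coincides with the observational one, so its regression coefficients and residual precision are precisely $B_{k,\pa(k)}$ and $\gamma_k$. Finally, $\Psi$ is smooth on the open set $V \subseteq \mathcal{S}\times\mathcal{V}\times\mathcal{W}$ where each $K^{(I_k)}$ and each submatrix $\Sigma^{(I_k)}_{\pa(k),\pa(k)}$ is invertible; this $V$ is open and contains $M^\mcI_D$, since the relevant matrices are positive definite on the image.

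Having $\Psi\circ\Phi^\mcI_D = \mathrm{id}$ with both maps smooth, I would conclude as follows. Injectivity of $\Phi^\mcI_D$ is immediate; differentiating gives $d\Psi \circ d\Phi^\mcI_D = \mathrm{id}$, so $d\Phi^\mcI_D$ is everywhere injective and $\Phi^\mcI_D$ is an immersion; and $\Psi$ restricts to a continuous inverse of $\Phi^\mcI_D$ on its image, making $\Phi^\mcI_D$ a homeomorphism onto $M^\mcI_D$. An injective immersion that is a homeomorphism onto its image is a smooth embedding, so $M^\mcI_D$ is an embedded smooth manifold. (Equivalently, $\rho := \Phi^\mcI_D \circ \Psi$ is a smooth idempotent map on $V$ whose fixed-point set $M^\mcI_D$ is therefore an embedded submanifold.) I expect the main obstacle to be the careful verification of $\Psi \circ \Phi^\mcI_D = \mathrm{id}$ together with the openness of the domain $V$: one must read off from the block structure of $K^{(I)}$ in Lemma \ref{lem:natural-parameters} that the regression formulas return exactly the model parameters, and check that conservativeness guarantees every vertex $k$ is covered by some admissible $I_k$.
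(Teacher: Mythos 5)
Your proposal is correct, but it is organized around a genuinely different device than the paper's proof. The paper verifies the four ingredients of an embedding separately and by hand: injectivity via a triangularity argument (ordering the vertices by an inverse topological sort so that $B$ is strictly lower triangular, and using conservativeness to turn equality of the precision matrices $K^{(I)}$ into $A^2 = \diag(\gamma)\diag(\gamma')^{-1}$ with $A$ unit lower triangular, forcing $A = \mId$); continuity of the inverse via uniqueness and continuity of the \emph{Cholesky decomposition} of $K^{(I)}_{I^c,I^c}$ plus a linear solve for $B_{I,I^c}$; and the immersion property via an explicit computation of the directional derivatives $\incr{K^{(I)}}(B,\gamma)(H^{(a,b)},0)$ and $\incr{K^{(I)}}(B,\gamma)(0,e_b)$, exhibiting matrices $F^{(a,b)}$ and $G^{(b)}$ and checking their linear independence. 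You instead construct a single \emph{smooth} left inverse $\Psi$ from the covariances $\Sigma^{(I_k)} = (K^{(I_k)})^{-1}$ via the Schur-complement regression formulas, and then harvest injectivity, the immersion property (by differentiating $\Psi \circ \Phi^\mcI_D = \mathrm{id}$), and the homeomorphism-onto-image property all at once. Your key identity does hold: since the interventional distribution is jointly Gaussian and, for $k \notin I_k$, the conditional of $X_k$ given $X_{\pa(k)}$ under $D_{I_k}$ is the observational one (with $\eps_k$ independent of $X_{\pa(k)}$ by acyclicity of $D_{I_k}$), the Gaussian conditioning formulas return exactly $B_{k,\pa(k)}$ and $\sigma_k^2 = \gamma_k^{-1}$; note these are covariance-based (not second-moment-based) formulas, so they are the population analogue of Lemma \ref{lem:maximum-likelihood} only after mean-centering, a point worth stating explicitly. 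Conservativeness enters in both proofs in the same way, guaranteeing an admissible $I_k$ for every vertex. What your route buys is economy and a conceptually transparent reason why the parameterization is an embedding (it admits a smooth retraction of an open neighborhood onto the image); what the paper's route buys is elementary self-containedness — each property is checked by direct linear algebra, with explicit tangent vectors that make the rank of $\incr{\Phi^\mcI_D}$ visible — and it avoids having to establish smoothness (rather than mere continuity) of the recovery map, which is the one nontrivial verification your argument hinges on and which you would still need to write out (matrix inversion and Schur complements on the positive definite cone are smooth, so this does go through).
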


\begin{proof}
  We have to prove the following points:
  \begin{subprop}
    \item \label{itm:smooth} $\Phi^\mcI_D$ is smooth;
    \item \label{itm:injective} $\Phi^\mcI_D$ is injective (and hence a bijection onto its image);
    \item \label{itm:homeomorphic} the inverse of $\Phi^\mcI_D$ (on its image) is continuous;
    \item \label{itm:immersion} $\Phi^\mcI_D$ is an immersion, that is, its derivative is injective everywhere.
  \end{subprop}
  Points \ref{itm:injective} and \ref{itm:homeomorphic} say that $\Phi^\mcI_D$ is a \emph{topological embedding}; points \ref{itm:smooth} and \ref{itm:immersion} strengthen the result to show that $\Phi^\mcI_D$ is even an embedding in the sense of differential geometry.
  
  We will now give the (rather technical) proofs of the aforementioned four points.  Throughout the proofs, we will always assume w.l.o.g. that the vertices of $D = ([p], E)$ are numbered according to an inverse topological sorting, such that all matrices in $\mathbf{B}(D)$ are strictly lower triangular matrices.
  
  \begin{subprop}
    \item The smoothness of $\Phi^\mcI_D$ is immediately clear from its definition: $\Phi^\mcI_D$ is a composition of smooth functions.
        
    \item Let $(B, \gamma)$ and $(B', \gamma') \in \mathbf{B}(D) \times \sR^p_{>0}$ such that $\Phi^\mcI_D(B, \gamma) = \Phi^\mcI_D(B', \gamma')$; by the definition of $\Phi^\mcI_D$, this is the case if and only if $K^{(I)}(B, \gamma) = K^{(I)}(B', \gamma')$ for all $I \in \mcI$.  This condition simplifies to
    $$
      (\mathbbm{1} - B) R^{(I)} \diag(\gamma) R^{(I)} (\mathbbm{1} - B)^\transp = (\mathbbm{1} - B') R^{(I)} \diag(\gamma') R^{(I)} (\mathbbm{1} - B')^\transp \ ,
    $$
    or, with the abbreviation $A := (\mathbbm{1} - B)^{-1} (\mathbbm{1} - B')$,
    \begin{equation}
      R^{(I)} \diag(\gamma) R^{(I)} A^{-\transp} = A R^{(I)} \diag(\gamma') R^{(I)} \ . \label{eqn:different-parameterizations}
    \end{equation}
    By the assumption made before, $B$ and $B'$ are strict lower triangular matrices, hence $A$ is a lower triangular matrix with ones as diagonal entries.  Then, the left-hand side of equation (\ref{eqn:different-parameterizations}) is an \emph{upper} triangular matrix, whereas the right-hand side is a \emph{lower} triangular matrix.  We conclude that both sides of the equation must consist of a \emph{diagonal} matrix, and that we can transpose the left-hand side:
    \begin{equation}
      A^{-1} R^{(I)} \diag(\gamma) R^{(I)} = A R^{(I)} \diag(\gamma') R^{(I)} \ . \label{eqn:different-parameterizations-transp}
    \end{equation}
    For some $a \notin I$, the $a\supscr{th}$ column of the matrix equation (\ref{eqn:different-parameterizations-transp}) reads
    \begin{equation}
      (A^{-1} \diag(\gamma))_{\smallbullet \; a} = (A^{-1})_{\smallbullet \; a} \gamma_a = A_{\smallbullet \; a} \gamma'_a = (A \diag(\gamma'))_{\smallbullet \; a} \ . \label{eqn:different-parameterizations-column}
    \end{equation}
    Since the family of targets \mcI{} is \emph{conservative}, there is, for every $a \in [p]$, some $I \in \mcI$ such that $a \notin I$; because equation \ref{eqn:different-parameterizations-transp} holds for every $I \in \mcI$, the column-wise equation (\ref{eqn:different-parameterizations-column}) holds for every $a \in [p]$, so we finally find $A^{-1} \diag(\gamma) = A \diag(\gamma')$, or, equivalently, $A^2 = \diag(\gamma) \diag(\gamma')^{-1}$.  Because the diagonal of $A^2$ only consists of ones, we see that $\gamma = \gamma'$.  It follows that $A^2 = \mathbbm{1}$, and because $A$ is a unit triangular matrix, this means that $A = \mathbbm{1}$, and hence, by definition of $A$, $B = B'$.  Therefore, $\Phi^\mcI_D$ is injective.
    
    \item We can restrict our considerations to the parameterizations of the precision matrices:
    \begin{align}
      K^{(I)}_{I^c, I^c} & = P^{(I)} K^{(I)} P^{(I)\transp} = (\mathbbm{1} - B)_{I^c, I^c} \diag(\gamma_{I^c}) (\mathbbm{1} - B_{I^c, I^c})^\transp \ , \label{eqn:precision-non-I-non-I} \\
      K^{(I)}_{I, I^c} & = Q{(^I)} K^{(I)} P^{(I)\transp} = - Q^{(I)} B R^{(I)} \diag(\gamma) \big( P^{(I)\transp} - R^{(I)} B^\transp P^{(I)\transp} \big) \nonumber \\
      & = - B_{I, I^c} \diag(\gamma_{I^c}) \left(\mathbbm{1} - B_{I^c, I^c} \right)^\transp \ . \label{eqn:precision-I-non-I}
    \end{align}
    By assuming, as before, that $B$ is a strict lower triangular matrix, (\ref{eqn:precision-non-I-non-I}) represents the Cholesky decomposition of $K^{(I)}_{I^c, I^c}$.  This decomposition is unique, and $B_{I^c, I^c}$ as well as $\gamma_{I^c}$ depend \emph{continuously} on $K^{(I)}_{I^c, I^c}$ \citep{Schwarz2006Numerische}.
    
    For each $b \in [p]$, there is some $I \in \mcI$ that does not contain $b$ since \mcI{} is conservative.  Hence $\gamma_b$ can be calculated out of $K^{(I)}_{I^c, I^c}$ by performing the Cholesky decomposition as described above.  This shows that $\gamma$ is a continuous function of the precision matrices $(K^{(I)})_{I \in \mcI}$.
    
    Assume now that $a \grarright b$ is an arrow in $D$, and let $I \in \mcI$ be an intervention target with $b \notin I$.  If $a \notin I$, $B_{ab}$ can also be calculated from $K^{(I)}_{I^c, I^c}$ via the (continuous) Cholesky decomposition.  Otherwise, $B_{ab}$ is an entry of the matrix $B_{I, I^c}$ which can be calculated by solving equation (\ref{eqn:precision-I-non-I}):
    $$
      B_{I, I^c} = - K^{(I)}_{I, I^c} (\mathbbm{1} - B_{I^c, I^c})^{-\transp} \diag(\gamma_{I^c})^{-1} \ ,
    $$
    which is a \emph{continuous} function since the matrix inversion is continuous.  Altogether, also the matrix $B \in \mathbf{B}(D)$ is a continuous function of the precision matrices $(K^{(I)})_{I \in \mcI}$, what proves the claim.
    
    \item We have to show that the derivative $\incr{\Phi^\mcI_D}(B, \gamma)$ has maximal rank for all $(B, \gamma) \in \mathbf{B}(D) \times \sR^p_{>0}$.  For that aim, we consider the canonical basis
    $$
      \{(H^{(a, b)}, 0)\}_{(a, b) \in E} \cup \{(0, e_i)\}_{1 \leq i \leq p}
    $$
    of $\mathbf{B}(D) \times \sR^p$, the tangent space of $\mathbf{B}(D) \times \sR^p_{>0}$ at the point $(B, \gamma)$, where $H^{(a, b)}$ denotes the $p \times p$ matrix with $H^{(a, b)}_{ab} = 1$ and $H^{(a, b)}_{ij} = 0$ for $(i, j) \neq (a, b)$, and $e_i$ denotes the $i\supscr{th}$ canonical basis vector of $\sR^p$.  We must show that
    $$
      \{\incr{\Phi^\mcI_D}(B, \gamma)(H^{(a, b)}, 0)\}_{(a, b) \in E} \cup \{\incr{\Phi^\mcI_D}(B, \gamma)(0, e_i)\}_{1 \leq i \leq p}
    $$
    is a linearly independent set for all $(B, \gamma) \in \mathbf{B}(D) \times \sR^p_{>0}$.  Again, it is sufficient to consider the derivatives of the precision matrices $K^{(I)}$.
    
    We start with the directional derivative of $K^{(I)}$ in direction $(H^{(a, b)}, 0)$ for a pair $(a, b) \in E$.  This derivative is
    \begin{multline*}
      \incr{K^{(I)}}(B, \gamma)(H^{(a, b)}, 0) = \\ 
      - H^{(a, b)} R^{(I)} \diag(\gamma) R^{(I)} (\mathbbm{1} - B)^\transp - (\mathbbm{1} - B) R^{(I)} \diag(\gamma) R^{(I)} H^{(a, b)\transp} \ .
    \end{multline*}
    For a matrix $A \in \sR^{p \times p}$, the matrix $H^{(a, b)} A$ contains $A_{b \; \smallbullet}$ as the $a\supscr{th}$ row; all other rows are filled with zeros.  We then can see that
    $$
      \left[ R^{(I)} \diag(\gamma) R^{(I)} (\mathbbm{1} - B)^\transp \right]_{b \; \smallbullet} = \begin{cases} \gamma_b \left((\mathbbm{1} - B)_{\smallbullet \; b} \right)^\transp, & \textrm{if } b \notin I, \\ 0, & \textrm{otherwise.} \end{cases}
    $$
    Based on these considerations and the fact that $B$ is a strictly lower triangular matrix, one can then show that, if $b \notin I$, $\incr{K^{(I)}}(B, \gamma)(H^{(a, b)}, 0) = F^{(a, b)}$, where
    $$
      F^{(a, b)} := \gamma_b
      \left(
      \begin{array}{c@{}|@{}c@{}|@{}c}
        \text{\huge 0} &
        \begin{array}{c}
          0 \\ \vdots \\ 0 \\ -1 \\ B_{b+1,b} \\ \vdots \\ B_{a-1,b}
        \end{array}
        & \text{\huge 0} \\\hline
        \begin{array}{ccccccc}
          0 & \cdots & 0 & -1 & B_{b+1,b} & \cdots & B_{a-1,b}
        \end{array}
        & 2B_{ab} &
        \begin{array}{ccc}
          B_{a+1,b} & \cdots & B_{pb}
        \end{array}
        \\\hline
        \text{\huge 0} &
        \begin{array}{c}
          B_{a+1,b} \\ \vdots \\ B_{pb}
        \end{array}
        & \text{\huge 0}
      \end{array}
      \right)
    $$
    
    We continue with the calculation of the directional derivative of $K^{(I)}$ in direction $(0, e_b)$, $1 \leq b \leq p$.  In this less tedious case, we that
    $$
      \incr{K^{(I)}}(B, \gamma)(0, e_b) = \begin{cases} (\mathbbm{1} - B)_{\smallbullet \; b} \left( (\mathbbm{1} - B)_{\smallbullet \; b} \right)^\transp, & \text{if } b \notin I,\\ 0, & \text{otherwise.} \end{cases}
    $$
    This means that, for $b \notin I$, we have $\incr{K^{(I)}}(B, \gamma)(0, e_b) = G^{(b)}$, where
    $$
      G^{(b)} :=
      \left(
      \begin{array}{c|c}
        \text{\huge 0} & \text{\huge 0} \\\hline
        \text{\huge 0} & 
        \begin{array}{cccc}
          1 & -B_{b+1,b} & \cdots & -B_{pb} \\
          -B_{b+1,b} & & & \\
          \vdots & & \text{\huge *} & \\
          -B_{pb} & & &
        \end{array}
      \end{array}
      \right)
    $$
    It can easily be seen that the matrices $\{F^{(a, b)}\}_{a > b} \cup \{G^{(b)}\}_{1 \leq b \leq p}$ are linearly independent.  Since for each $b \in [p]$, there is some $I \in \mcI$ with $b \notin I$, we can finally conclude that the set
    $$
      \{\incr{\Phi^\mcI_D}(B, \gamma)(H^{(a, b)}, 0)\}_{(a, b) \in E} \cup \{\incr{\Phi^\mcI_D}(B, \gamma)(0, e_i)\}_{1 \leq i \leq p}
    $$
    is linearly independent, which proves the claim.
  \end{subprop}
\end{proof}

We have now shown that the parameter sets $M^\mcI_D$ are smooth embedded manifolds.  To be able to apply Theorem \ref{thm:bic-consistency}, it remains to show that two different parameter manifolds are not arbitrarily close.

\begin{proposition}
  \label{prop:models-separated}
  Let \mcI{} be a conservative family of targets, and let $D_1$ and $D_2$ be two DAGs that are \emph{not} \mcI-equivalent.  Assume that $\theta \in \mathcal{S} \times \mathcal{V} \times \mathcal{W}$ is a parameter vector with $\theta \in \overline{M^\mcI_{D_1}}$ and $\theta \in M^\mcI_{D_2}$.  Then also $\theta \in M^\mcI_{D_1}$ holds.
\end{proposition}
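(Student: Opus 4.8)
The plan is to prove that $M^\mcI_{D_1}$ contains every one of its limit points whose precision blocks are positive definite; the hypotheses then close the argument, with $\theta \in M^\mcI_{D_2} \subseteq \interior{\Theta}$ supplying exactly this positive-definiteness, while the non-$\mcI$-equivalence of $D_1$ and $D_2$ serves only to place us in the regime of distinct models required by Theorem \ref{thm:bic-consistency}. Since $\theta \in \overline{M^\mcI_{D_1}}$, I would first fix a sequence $(B^{(n)}, \gamma^{(n)}) \in \mathbf{B}(D_1) \times \sR^p_{>0}$ with $\Phi^\mcI_{D_1}(B^{(n)}, \gamma^{(n)}) \to \theta$. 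The goal is to show this sequence converges to an admissible limit $(B, \gamma) \in \mathbf{B}(D_1) \times \sR^p_{>0}$: once this is done, continuity of $\Phi^\mcI_{D_1}$, established in Proposition \ref{prop:smooth-embedding}, gives $\Phi^\mcI_{D_1}(B, \gamma) = \theta$, that is $\theta \in M^\mcI_{D_1}$, which is the claim.

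Convergence of the parameters is controlled by the explicit inverse map built in the proof of Proposition \ref{prop:smooth-embedding}. For each target $I$, equation (\ref{eqn:precision-non-I-non-I}) exhibits $K^{(I)}_{I^c, I^c}$ as the factorization $(\mId - B)_{I^c, I^c}\, \diag(\gamma_{I^c})\, ((\mId - B)_{I^c, I^c})^\transp$ with $(\mId - B)_{I^c, I^c}$ unit lower triangular; because $\mcI$ is conservative, every vertex $k$ lies outside some target $I$, so the $k$th row of $B$ and the coordinate $\gamma_k$ are recovered from $K^{(I)}$ through the continuous formulas (\ref{eqn:precision-non-I-non-I}) and (\ref{eqn:precision-I-non-I}). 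The decisive point is that the limiting blocks $K^{(I)}_{I^c, I^c}$ are positive definite, which is where $\theta \in M^\mcI_{D_2} \subseteq \interior{\Theta}$ enters: on the open cone of positive-definite matrices this triangular factorization is unique and continuous, so $\gamma^{(n)} \to \gamma$ with $\gamma > 0$ componentwise and $B^{(n)} \to B$, and since $\mathbf{B}(D_1)$ is a closed linear subspace (the non-edge entries vanish) the limit satisfies $B \in \mathbf{B}(D_1)$.

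The main obstacle is to rule out that the parameter sequence escapes to the boundary of the admissible set, which would yield a genuine boundary point of $M^\mcI_{D_1}$ lying outside the manifold. This is precisely what positive-definiteness forbids: since $(\mId - B)_{I^c, I^c}$ has unit determinant, $\det K^{(I)}_{I^c, I^c} = \prod_{k \notin I} \gamma_k$, so a coordinate $\gamma_b^{(n)} \to 0$ would drive this determinant to $0$ for a target $I$ with $b \notin I$, contradicting the positive-definite limit, while no coordinate can diverge without preventing convergence to the finite $\theta$; by conservativeness every coordinate of $\gamma$, and with it every entry of $B$, is thus confined to a compact subset of the admissible region. Once $(B, \gamma)$ is admissible, continuity of $\Phi^\mcI_{D_1}$ completes the proof, and the remaining coordinates $\nu^{(I)}$ and $\eta^{(I)}$ of $\theta$ need no separate verification: by Definition \ref{def:exponential-parameterization} they are functions of the (now matched) precision matrices and the fixed experimental parameters, hence agree automatically.
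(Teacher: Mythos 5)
Your proof is correct, and it reaches the conclusion by a genuinely different mechanism than the paper's. The paper works with the \emph{forward} map only: it takes a sequence $(B^{(j)},\gamma^{(j)})$ with $\Phi^\mcI_{D_1}(B^{(j)},\gamma^{(j)})\to\theta$, asserts boundedness of this parameter sequence, extracts a convergent subsequence by Bolzano--Weierstrass with limit in $\mathbf{B}(D_1)\times\sR^p_{\geq 0}$, uses the continuous extension of $\Phi^\mcI_{D_1}$ to that closed set to get $\Phi^\mcI_{D_1}(B,\gamma)=\theta$, and only at the end invokes $\theta\in M^\mcI_{D_2}$ via the determinant identity of Lemma \ref{lem:natural-parameters} and conservativeness to rule out $\gamma_b=0$. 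You instead run the \emph{inverse} map along the sequence: positive definiteness of the limiting blocks $K^{(I)}_{I^c,I^c}$ (guaranteed by $\theta\in M^\mcI_{D_2}$) makes the unit-triangular/diagonal factorization underlying part (iii) of Proposition \ref{prop:smooth-embedding} continuous at $\theta$, so the whole parameter sequence converges, with $\gamma>0$ automatic (the diagonal factor of a positive definite matrix is positive) and $B\in\mathbf{B}(D_1)$ by closedness; continuity of $\Phi^\mcI_{D_1}$ then finishes. Your route buys two things: convergence of the full sequence rather than a subsequence, and no need for the paper's boundedness step --- which is in fact the weakest point of the paper's argument, since a polynomial map can keep $\theta^{(j)}$ bounded along an unbounded parameter sequence (e.g.\ $\gamma^{(j)}_k=1/j$ together with a $B$-entry $\sqrt{j}$ keeps every entry of $K^{(I)}$ bounded, with a singular limit); the paper implicitly needs positive definiteness of the limit to exclude this, which is exactly what your determinant argument supplies explicitly. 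The paper's route, in turn, only needs invertibility $\det K^{(I)}\neq 0$ rather than full positive definiteness, and is self-contained rather than leaning on the continuity-of-the-inverse machinery. Two small points to tighten: the recovery map must be understood as defined (and continuous) on the open set of \emph{all} tuples whose blocks $K^{(I)}_{I^c,I^c}$ are positive definite, not just on $M^\mcI_{D_1}$, with uniqueness of the factorization guaranteeing that it returns $(B^{(n)},\gamma^{(n)})$ along the sequence; and your remark that ``no coordinate can diverge without preventing convergence to the finite $\theta$'' is only valid in the order you implicitly use it (first bound $\gamma$ below via the determinant and above via the diagonal, then bound $B$) --- for a single coordinate in isolation it would reproduce the paper's gap. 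Finally, you correctly observe that the non-$\mcI$-equivalence hypothesis is never used; the same is true of the paper's own proof.
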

\begin{proof}
  each $j$, a unique parameterization $(B^{(j)}, \gamma^{(j)}) \in \mathbf{B}(D) \times \sR^p_{>0}$ such that $\theta^{(j)} = \Phi^\mcI_{D_1}(B^{(j)}, \gamma^{(j)})$.  The sequence $\left(B^{(j)}, \gamma^{(j)}\right)_{j \geq 1}$ must be bounded, otherwise the sequence $\theta^{(j)} = \Phi^\mcI_{D_1}(B^{(j)}, \gamma^{(j)})$ could not be bounded since $K^{(I)}$, $I \in \mcI$, are polynomials in $B$ and $\gamma$ (Definition \ref{def:exponential-parameterization}).  By the theorem of Bolzano-Weierstrass we therefore have a subsequence $\left(B^{(j_k)}, \gamma^{(j_k)}\right)$ that converges to some $(B, \gamma) \in \mathbf{B}(D) \times \sR^p_{\geq 0} = \overline{\mathbf{B}(D) \times \sR^p_{>0}}$.
  
  The parameterization $\Phi^\mcI_{D_1}$ has a continuous continuation on $\mathbf{B}(D) \times \sR^p_{\geq 0}$.  Therefore we have
  $$
    \theta^{(j_k)} = \Phi^\mcI_{D_1}(B^{(j_k)}, \gamma^{(j_k)}) \stackrel{k \to \infty}{\longrightarrow} \Phi^\mcI_{D_1}(B, \gamma) \ ,
  $$
  and $\Phi^\mcI_{D_1}(B, \gamma) = \theta$ holds because of the uniqueness of limits.
  
  It remains to show that $(B, \gamma) \in \mathbf{B}(D) \times \sR^p_{>0}$, that is, to show that $\gamma_b \neq 0$ for all $b \in [p]$.  Since \mcI{} is conservative, there is, for each $b \in [p]$, some $I \in \mcI$ such that $b \notin I$.  From Lemma \ref{lem:natural-parameters}, we know that
  $$
    \det K^{(I)}(B, \gamma) = \det \tilde{K}^{(I)} \prod_{a \notin I} \gamma_a \ ;
  $$
  since the prerequisite $\theta \in M^\mcI_{D_2}$ implies $\det K^{(I)} \ne 0$, we conclude that $\gamma_a \ne 0$ for all $a \notin I$.  This in particular implies $\gamma_b \ne 0$, which completes the proof.
\end{proof}

We have now shown that the parameter sets $M_D^\mcI$ of all DAGs $D$ fulfill the prerequisites of Theorem \ref{thm:bic-consistency}; an immediate consequence is the following corollary:

\begin{corollary}
  Consider model (\ref{eqn:basic-model2}) with the family of intervention targets \mcI.  Assume (A3) from Theorem \ref{thm:consistency}, and the estimator
  $$
    \hat{D} = \argmin_D - \ell_D(\hat{B}(D), \{\hat{\sigma}_k^2(D)\}_k; \data) + \frac{1}{2} \log(n) \dim(D) \ .
  $$
  Then: as $n \to \infty$,
  $$
    \PP[\hat{D}\ \mbox{is a minimum independence map}] \to 1\ ,
  $$
  where $\PP$ refers to the probability distribution under the true model.
\end{corollary}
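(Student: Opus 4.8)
The plan is to identify $\hat{D}$ as the maximizer over DAGs of the Bayesian information criterion $S(M^\mcI_D; \data)$ from (\ref{eqn:bic}), evaluated on the parameter manifolds $M^\mcI_D$ of Definition \ref{def:exponential-parameterization}, and then feed these manifolds into Theorem \ref{thm:bic-consistency}. First I would check that the two optimization problems agree. By Proposition \ref{prop:mixing-exponential-family} the joint log-likelihood of $(\mathcal{T}, \mathbf{X})$ splits as $\sum_{I \in \mcI} n^{(I)} \log \tilde{w}_I + \ell_D(B, \gamma; \data)$; moreover the $\eta$-coordinates in Definition \ref{def:exponential-parameterization} are built precisely so that every point of $M^\mcI_D$ reproduces the true mixing weights $\tilde{w}_I$, whence the weight term is a constant $C_0$ independent of both $(B, \gamma)$ and $D$. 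Taking the supremum over $M^\mcI_D$ therefore reduces to maximizing $\ell_D$ over $\mathbf{B}(D) \times \sR^p_{>0}$, whose value is given explicitly by Lemma \ref{lem:maximum-likelihood}. Since the embedding of Proposition \ref{prop:smooth-embedding} preserves the domain dimension, $\dim(M^\mcI_D) = \dim(D) + p$, and because the additive $p$ and the constant $C_0$ are the same for every DAG, I conclude $\argmax_D S(M^\mcI_D; \data) = \hat{D}$.

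Next I would translate the notion of independence map into manifold membership. A DAG $D^*$ is an independence map exactly when there are parameters $(B^*, \gamma^*)$ reproducing all interventional precision matrices $K^{(I)}$ and shift vectors $\nu^{(I)}$, that is, when the true natural parameter $\theta_0$ lies in $M^\mcI_{D^*}$; since $\dim(M^\mcI_{D^*})$ equals the edge count up to the constant offset $p$, being a \emph{minimum} independence map is the same as $M^\mcI_{D^*}$ having minimal dimension among all manifolds containing $\theta_0$. The hypotheses of Theorem \ref{thm:bic-consistency} are then exactly what the preceding results deliver: the $M^\mcI_D$ are smooth embedded manifolds by Proposition \ref{prop:smooth-embedding}; $\mcI$-equivalent DAGs induce the same manifold, so distinct manifolds correspond to non-$\mcI$-equivalent DAGs, for which Proposition \ref{prop:models-separated} supplies the required separation property (a limit point of one manifold that lies in the interior part of another already lies in the first); there are only finitely many DAGs on $p$ vertices, hence finitely many manifolds; and $\theta_0 \in \interior{\Theta}$ because the natural parameter space is open.

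Finally I would fix a single minimum independence map $D^*$ and compare it against every DAG $D$ that is not a minimum independence map. If $D$ fails to be an independence map then $\theta_0 \in M^\mcI_{D^*} \setminus M^\mcI_D$; if $D$ is an independence map but not a minimal one then $\theta_0 \in M^\mcI_{D^*} \cap M^\mcI_D$ with $\dim(M^\mcI_{D^*}) < \dim(M^\mcI_D)$. In either case Theorem \ref{thm:bic-consistency} yields $\PP[S(M^\mcI_{D^*}; \data) > S(M^\mcI_D; \data)] \to 1$, and a union bound over the finitely many such $D$ shows that with probability tending to one $D^*$ simultaneously beats all of them; hence the maximizer $\hat{D}$ must itself be a minimum independence map, which is the claim.

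The genuinely hard analytic content -- smoothness, injectivity, the immersion property, and the non-accumulation of the parameter manifolds -- is already discharged in Propositions \ref{prop:smooth-embedding} and \ref{prop:models-separated}, so I expect the main obstacle to be the bookkeeping of the first step: verifying that the supremum of the \emph{joint} log-likelihood over $M^\mcI_D$ coincides, up to constants independent of $D$, with the conditional quantity $\ell_D(\hat{B}(D), \{\hat{\sigma}_k^2(D)\}_k; \data)$, and that the weight constant $C_0$ together with the dimension offset $p$ genuinely cancel from the $\argmax$. Establishing the clean equivalence ``$D$ is an independence map $\Leftrightarrow \theta_0 \in M^\mcI_D$'', and thereby that minimal edge count is minimal manifold dimension, is the conceptual step on which the whole reduction to Theorem \ref{thm:bic-consistency} hinges.
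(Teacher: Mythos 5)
Your proposal is correct and follows essentially the same route as the paper: the paper obtains this corollary as an ``immediate consequence'' of Theorem \ref{thm:bic-consistency} once Propositions \ref{prop:smooth-embedding} and \ref{prop:models-separated} establish that the manifolds $M^\mcI_D$ are smooth embedded manifolds satisfying the required separation property. Your write-up simply makes explicit the bookkeeping the paper leaves implicit --- the cancellation of the mixing-weight constant and the dimension offset $p$, the identification ``$D$ is an independence map $\Leftrightarrow \theta_0 \in M^\mcI_D$'', and the union bound over the finitely many DAGs --- all of which is sound.
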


As we noted in Section \ref{sec:structure-learning-bic}, every minimum independence map is \mcI-Markov equivalent to the true model if the true observational and all corresponding interventional densities are faithful.  In this case (that is, under the assumptions (A1) and (A2) of Section \ref{sec:structure-learning-bic}), the optimization problem in (\ref{eqn:MLEEQ}) almost surely has a \emph{unique} solution in the limit $n \to \infty$, namely the \mcI-Markov equivalence class of the true model (Theorem \ref{thm:consistency}).

\bibliographystyle{chicago}

\bibliography{references}

\begin{thebibliography}{}

\bibitem[\protect\citeauthoryear{Andersson, Madigan, and Perlman}{Andersson
  et~al.}{1997}]{Andersson1997Characterization}
Andersson, S., D.~Madigan, and M.~Perlman (1997).
\newblock {A} characterization of {M}arkov equivalence classes for acyclic
  digraphs.
\newblock {\em The Annals of Statistics\/} {\em 25}, 505--541.

\bibitem[\protect\citeauthoryear{Banerjee, {El Ghaoui}, and
  {d'Aspremont}}{Banerjee et~al.}{2008}]{Banerjee2008Model}
Banerjee, O., L.~{El Ghaoui}, and A.~{d'Aspremont} (2008).
\newblock {M}odel selection through sparse maximum likelihood estimation for
  multivariate {G}aussian or binary data.
\newblock {\em Journal of Machine Learning Research\/} {\em 9}, 485--516.

\bibitem[\protect\citeauthoryear{Chickering}{Chickering}{2002a}]{Chickering2002Learning}
Chickering, D. (2002a).
\newblock {L}earning equivalence classes of {B}ayesian-network structures.
\newblock {\em Journal of Machine Learning Research\/} {\em 3}, 445--498.

\bibitem[\protect\citeauthoryear{Chickering}{Chickering}{2002b}]{Chickering2002Optimal}
Chickering, D. (2002b).
\newblock {O}ptimal structure identification with greedy search.
\newblock {\em Journal of Machine Learning Research\/} {\em 3}, 507--554.

\bibitem[\protect\citeauthoryear{Chickering}{Chickering}{1996}]{Chickering1996Learning}
Chickering, D.~M. (1996).
\newblock {L}earning {B}ayesian networks is {NP}-complete.
\newblock In D.~Fisher and H.~Lenz (Eds.), {\em {L}earning from {D}ata:
  {A}rtificial {I}ntelligence and {S}tatistics {V}}, pp.\  121--130. Springer.

\bibitem[\protect\citeauthoryear{Cooper and Yoo}{Cooper and
  Yoo}{1999}]{Cooper1999Causal}
Cooper, G.~F. and C.~Yoo (1999).
\newblock {C}ausal discovery from a mixture of experimental and observational
  data.
\newblock In {\em {P}roc. {F}ifthteenth {C}onference on {U}ncertainty in
  {A}rtificial {I}ntelligence ({UAI} 1999)}, pp.\  116--125.

\bibitem[\protect\citeauthoryear{Eaton and Murphy}{Eaton and
  Murphy}{2007}]{Eaton2007Exact}
Eaton, D. and K.~Murphy (2007).
\newblock {E}xact {B}ayesian structure learning from uncertain interventions.
\newblock In {\em {P}roceedings of the {E}leventh {I}nternational {C}onference
  on {A}rtificial {I}ntelligence and {S}tatistics}, Volume~2, pp.\  107--114.

\bibitem[\protect\citeauthoryear{Friedman, Hastie, and Tibshirani}{Friedman
  et~al.}{2007}]{Friedman2007Sparse}
Friedman, J., T.~Hastie, and R.~Tibshirani (2007).
\newblock {S}parse inverse covariance estimation with the graphical {Lasso}.
\newblock {\em Biostatistics\/} {\em 9}, 432--441.

\bibitem[\protect\citeauthoryear{Haughton}{Haughton}{1988}]{Haughton1988Choice}
Haughton, D. D.~M. (1988).
\newblock {O}n the choice of a model to fit data from an exponential family.
\newblock {\em The Annals of Statistics\/} {\em 16\/}(1), 342--355.

\bibitem[\protect\citeauthoryear{Hauser and Bühlmann}{Hauser and
  Bühlmann}{2012}]{Hauser2012Characterization}
Hauser, A. and P.~Bühlmann (2012).
\newblock {C}haracterization and greedy learning of interventional {M}arkov
  equivalence classes of directed acyclic graphs.
\newblock {\em Journal of Machine Learning Research\/} {\em 13}, 2409--2464.

\bibitem[\protect\citeauthoryear{He and Geng}{He and Geng}{2008}]{He2008Active}
He, Y.-B. and Z.~Geng (2008).
\newblock {A}ctive learning of causal networks with intervention experiments
  and optimal designs.
\newblock {\em Journal of Machine Learning Research\/} {\em 9}, 2523--2547.

\bibitem[\protect\citeauthoryear{Hoyer, Janzing, Mooij, Peters, and
  Sch\"olkopf}{Hoyer et~al.}{2009}]{Hoyer2009Nonlinear}
Hoyer, P., D.~Janzing, J.~Mooij, J.~Peters, and B.~Sch\"olkopf (2009).
\newblock {N}onlinear causal discovery with additive noise models.
\newblock In {\em {A}dvances in {N}eural {I}nformation {P}rocessing {S}ystems
  21, 22nd {A}nnual {C}onference on {N}eural {I}nformation {P}rocessing
  {S}ystems ({NIPS} 2008)}, pp.\  689--696.

\bibitem[\protect\citeauthoryear{Kalisch and B{\"u}hlmann}{Kalisch and
  B{\"u}hlmann}{2007}]{Kalisch2007Estimating}
Kalisch, M. and P.~B{\"u}hlmann (2007).
\newblock {E}stimating high-dimensional directed acyclic graphs with the
  {PC}-algorithm.
\newblock {\em Journal of Machine Learning Research\/} {\em 8}, 613--636.

\bibitem[\protect\citeauthoryear{Kalisch, M\"achler, Colombo, Maathuis, and
  B\"uhlmann}{Kalisch et~al.}{2012}]{Kalisch2010Causal}
Kalisch, M., M.~M\"achler, D.~Colombo, M.~Maathuis, and P.~B\"uhlmann (2012).
\newblock {C}ausal inference using graphical models with the {R} package pcalg.
\newblock {\em Journal of Statistical Software\/} {\em 47 (11)}, 1--26.

\bibitem[\protect\citeauthoryear{Lauritzen}{Lauritzen}{1996}]{Lauritzen1996Graphical}
Lauritzen, S. (1996).
\newblock {\em {G}raphical {M}odels}.
\newblock Oxford University Press.

\bibitem[\protect\citeauthoryear{Maathuis, Kalisch, and B\"uhlmann}{Maathuis
  et~al.}{2009}]{Maathuis2009Estimating}
Maathuis, M., M.~Kalisch, and P.~B\"uhlmann (2009).
\newblock {E}stimating high-dimensional intervention effects from observational
  data.
\newblock {\em The Annals of Statistics\/} {\em 37}, 3133--3164.

\bibitem[\protect\citeauthoryear{Meinshausen and B\"uhlmann}{Meinshausen and
  B\"uhlmann}{2010}]{Meinshausen2010Stability}
Meinshausen, N. and P.~B\"uhlmann (2010).
\newblock {S}tability selection (with discussion).
\newblock {\em Journal of the Royal Statistical Society Series B\/} {\em 72},
  417--473.

\bibitem[\protect\citeauthoryear{Pearl}{Pearl}{1988}]{Pearl1988Probabilistic}
Pearl, J. (1988).
\newblock {\em {P}robabilistic reasoning in intelligent systems: networks of
  plausible inference}.
\newblock Morgan Kaufmann.

\bibitem[\protect\citeauthoryear{Pearl}{Pearl}{2000}]{Pearl2000Causality}
Pearl, J. (2000).
\newblock {\em {C}ausality: {M}odels, {R}easoning and {I}nference}.
\newblock Cambridge University Press.

\bibitem[\protect\citeauthoryear{Peters, Mooij, Janzing, and
  Sch\"olkopf}{Peters et~al.}{2011}]{Peters2011Identifiability}
Peters, J., J.~Mooij, D.~Janzing, and B.~Sch\"olkopf (2011).
\newblock {I}dentifiability of causal graphs using functional models.
\newblock In {\em {P}roceedings of the 27th {C}onference on {U}ncertainty in
  {A}rtificial {I}ntelligence ({UAI} 2011)}.

\bibitem[\protect\citeauthoryear{Sachs, Perez, Pe'er, Lauffenburger, and
  Nolan}{Sachs et~al.}{2005}]{Sachs2005Causal}
Sachs, K., O.~Perez, D.~Pe'er, D.~Lauffenburger, and G.~Nolan (2005).
\newblock {C}ausal protein-signaling networks derived from multiparameter
  single-cell data.
\newblock {\em Science\/} {\em 308\/}(5721), 523--529.

\bibitem[\protect\citeauthoryear{Schwarz and Köckler}{Schwarz and
  Köckler}{2006}]{Schwarz2006Numerische}
Schwarz, H.~R. and N.~Köckler (2006).
\newblock {\em {N}umerische {M}athematik\/} (6th ed.).
\newblock Stuttgart: Vieweg + Teubner.

\bibitem[\protect\citeauthoryear{Shimizu, Hoyer, Hyv\"arinen, and
  Kerminen}{Shimizu et~al.}{2006}]{Shimizu2006Linear}
Shimizu, S., P.~Hoyer, A.~Hyv\"arinen, and A.~Kerminen (2006).
\newblock {A} linear non-{G}aussian acyclic model for causal discovery.
\newblock {\em Journal of Machine Learning Research\/} {\em 7}, 2003--2030.

\bibitem[\protect\citeauthoryear{Silander and Myllym\"aki}{Silander and
  Myllym\"aki}{2006}]{Silander2006Simple}
Silander, T. and P.~Myllym\"aki (2006).
\newblock {A} simple approach for finding the globally optimal {B}ayesian
  network structure.
\newblock In {\em {P}roceedings of the 22th {C}onference on {U}ncertainty in
  {A}rtificial {I}ntelligence ({UAI} 2006)}.

\bibitem[\protect\citeauthoryear{Spirtes, Glymour, and Scheines}{Spirtes
  et~al.}{2000}]{Spirtes2000Causation}
Spirtes, P., C.~Glymour, and R.~Scheines (2000).
\newblock {\em {C}ausation, {P}rediction, and {S}earch\/} (Second ed.).
\newblock MIT Press.

\bibitem[\protect\citeauthoryear{Tsamardinos, Brown, and Aliferis}{Tsamardinos
  et~al.}{2006}]{Tsamardinos2006Maxmin}
Tsamardinos, I., L.~E. Brown, and C.~F. Aliferis (2006).
\newblock {T}he max-min hill-climbing {B}ayesian network structure learning
  algorithm.
\newblock {\em Machine Learning\/} {\em 65\/}(1), 31--78.

\bibitem[\protect\citeauthoryear{Verma and Pearl}{Verma and
  Pearl}{1990}]{Verma1990Equivalence}
Verma, T. and J.~Pearl (1990).
\newblock {O}n the equivalence of causal models.
\newblock In {\em {P}roceedings of the 11th {C}onference on {U}ncertainty in
  {A}rtificial {I}ntelligence ({UAI} 1990)}, pp.\  220--227.

\end{thebibliography}

\end{document}